\documentclass[a4paper,oneside,onecolumn]{article}
\usepackage{cancel}
 
\usepackage{soul}
\usepackage[left=2.0cm,top=2.5cm,right=2.0cm,bottom=2.5cm,bindingoffset=0.0cm]{geometry}

\usepackage{float} 
\usepackage{multirow}
\usepackage[T1]{fontenc} 

\usepackage{pgfplots}
\pgfplotsset{compat=1.18}
\usepgfplotslibrary{groupplots}

\usepackage[utf8]{inputenc} 
\newcommand{\vertiii}[1]{{\left\vert\kern-0.25ex\left\vert\kern-0.25ex\left\vert #1 
    \right\vert\kern-0.25ex\right\vert\kern-0.25ex\right\vert}}
\usepackage{dsfont}
\usepackage{graphics,graphicx,epstopdf} 

\usepackage{enumitem} 
\usepackage{mathrsfs}

\usepackage{subfig} 

\usepackage{amsmath,amssymb,amsthm,amsfonts} 
\usepackage{mathtools} 
\usepackage{booktabs}             

\allowdisplaybreaks

\usepackage{setspace} 
\usepackage{lineno, hyperref} 

\usepackage{color, xcolor}

\usepackage{authblk} 

\theoremstyle{plain} 
\newtheorem{theorem}{Theorem}[section]
\newtheorem{lemma}{Lemma}[section]

\newtheorem{assumption}{Assumption}[section]

\newtheorem{example}{Example}[section]
\theoremstyle{remark} 
\newtheorem{remark}{Remark}[section]

\definecolor{c1}{rgb}{0,0,1} 
\definecolor{c2}{rgb}{0,0.6,1} 
\definecolor{c3}{rgb}{0.5,0,0.5} 

\hypersetup{
	pdfauthor={Lok Pati},
	pdfsubject={},
	pdftitle={},
	pdfkeywords={},
	breaklinks = true, 
	linktocpage=true, 
	colorlinks=true,       
	linkcolor={c1},          
	citecolor={c2},        
	filecolor=black,      
	urlcolor={c3},       
}
\numberwithin{equation}{section}

\allowdisplaybreaks
\makeatletter
\def\namedlabel#1#2{\begingroup
    #2%
    \def\@currentlabel{#2}%
    \phantomsection\label{#1}\endgroup
}
\makeatother
\begin{document}


\title{\textbf{Sharp Error Estimates for a Fully Discrete Finite Element Method for Semilinear SPDEs with Multiplicative Noise and Nonsmooth Initial Data}}

\author[1]{\textsc{Jitendra Nath Naik}\thanks{\href{mailto:jitendra20232201@iitgoa.ac.in}{jitendra20232201@iitgoa.ac.in}}}
\author[1]{\textsc{Lok Pati Tripathi}\thanks{\href{mailto:lokpati@iitgoa.ac.in}{lokpati@iitgoa.ac.in}}}
\affil[1]{School of Mathematics and Computer Science, Indian Institute of Technology Goa, Goa 403401, India.}

\date{} 

\maketitle 


\begin{abstract}
This article establishes sharp strong error estimates for the fully discrete approximation of semilinear parabolic stochastic partial differential equations (SPDEs) driven by multiplicative noise and subject to nonsmooth initial data. The spatial discretization is based on a standard finite element method, coupled with the linearly implicit Euler scheme in time. By mapping the diffusion operator into negative fractional spaces, our framework accommodates both trace-class and space-time white noise. For nonsmooth initial data, by decoupling the noise regularity parameter $\beta \in (0,2)$ from the initial data regularity parameter $\mu \in (0,2]$, we derive sharp regularity estimates that isolate the exact loss of initial regularity into an integrable temporal singularity. Furthermore, we establish sharp strong convergence rates of $O(h^{\beta-\varepsilon} + k^{\frac{1}{2}\min\{\beta-\varepsilon, 1\}})$ for $\varepsilon>0$ away from $t = 0$. Finally, we consider physically relevant stochastic models, such as the modified Langmuir fractional surface coverage model and the parabolic Anderson model, in our numerical experiments to confirm the theoretical convergence rates.
\end{abstract}

\noindent
\textbf{Keywords:} stochastic parabolic partial differential equations, multiplicative noise, finite element method, linear implicit Euler method, nonsmooth initial data; sharp error estimates

\vspace{1em}

\noindent
\textbf{AMS Subject Classification:} 60H15, 60H35, 65C30, 65M60. 

\vspace{2em} 

\section{Introduction}\label{sec:introduction}

Let $(\Omega, \mathcal{F}, \mathbb{P})$ be a probability space equipped with a normal filtration $\{\mathcal{F}_t\}_{t \in [0,T]}$, and let $W$ be a $Q$-Wiener process (possibly cylindrical) on a separable Hilbert space $(U, (\cdot, \cdot)_U)$ with respect to $\{\mathcal{F}_t\}_{t \in [0,T]}$, where the covariance operator $Q \colon U \to U$ is linear, bounded, self-adjoint, and positive semidefinite. Let $\mathcal{O}\subset\mathbb{R}^{d}$ ($d \in \{1, 2, 3\}$) be either a convex bounded domain or a bounded domain with a $C^{1,1}$ boundary $\partial\mathcal{O}$. We consider the following semilinear stochastic convection-diffusion equation driven by multiplicative noise:
\begin{equation}\label{eq:SPDE}
\left\{
\begin{aligned}
    du(t,x) &= \big[ \nabla \cdot (\boldsymbol{a}(x)\nabla u(t,x)) - \boldsymbol{b}(x) \cdot \nabla u(t,x) - c(x)u(t,x) + f(t, x, u(t,x)) \big] dt \\
            &\quad \quad+ g(t, x, u(t,x)) \, dW(t), && (t,x) \in (0,T] \times \mathcal{O}, \\[1ex]
    u(t,x) &= 0, && (t,x) \in (0,T] \times \partial\mathcal{O}, \\[1ex]
    u(0,x) &= u_0(x), && x \in \mathcal{O}.
\end{aligned}
\right.
\end{equation}
where the coefficients $\boldsymbol{a} \colon \mathcal{O} \to \mathbb{R}^{d \times d}$, $\boldsymbol{b} \colon \mathcal{O} \to \mathbb{R}^{d}$, and $c \colon \mathcal{O} \to \mathbb{R}$ are sufficiently smooth and bounded. The nonlinear functions $f \colon [0,T] \times \mathcal{O} \times \mathbb{R} \to \mathbb{R}$ and $g \colon [0,T] \times \mathcal{O} \times \mathbb{R} \to \mathbb{R}$ are given measurable functions. We assume $\boldsymbol{a}(x)$ is symmetric and uniformly positive definite; that is, there exists a constant $a_0 > 0$ such that
$$y^T \boldsymbol{a}(x) y \ge a_0 |y|^2 \quad \text{for all } x \in \mathcal{O} \text{ and } y \in \mathbb{R}^d.$$

Following the standard approach to infinite-dimensional stochastic integration (see, e.g., Da Prato and Zabczyk~\cite{MR3236753}, Pr\'{e}v\^{o}t and R\"{o}ckner~\cite{MR2329435}), we let $U = L^2(\mathcal{O})$ and introduce the separable Hilbert space $U_0 \coloneqq Q^{1/2}(U)$, which is equipped with the inner product
\[
    (u_0, v_0)_{U_0} \coloneqq (Q^{-1/2}u_0, Q^{-1/2}v_0)_U, \quad \text{for } u_0, v_0 \in U_0,
\]
where $Q^{-1/2}$ denotes the pseudoinverse of $Q^{1/2}$ in the case where $Q$ is not injective. Furthermore, the space of all Hilbert--Schmidt operators $\Phi \colon U_0 \to H$ is denoted by $\mathrm{HS}(U_0, H)$. This forms a separable Hilbert space equipped with the norm
\[
    \|\Phi\|_{\mathrm{HS}(U_0, H)} \coloneqq \left( \sum_{m=1}^\infty \|\Phi \psi_m\|_H^2 \right)^{1/2},
\]
where $\{\psi_m\}_{m \ge 1}$ is an arbitrary orthonormal basis of $U_0$.

To study \eqref{eq:SPDE} analytically and prepare for the finite element discretization, we recast it as an abstract stochastic evolution equation in the separable Hilbert space $H = L^2(\mathcal{O})$. We define the linear operator $A \colon \mathcal{D}(A) \subset H \to H$ by $A v = -\nabla \cdot (\boldsymbol{a}(x) \nabla v)$, subject to homogeneous Dirichlet boundary conditions. The operator $-A$ generates an analytic $\mathcal{C}_0$-semigroup of contractions on $H$, denoted by $S(t) = e^{-tA}$ for $t \ge 0$. Applying the spectral theorem to $A^{-1}$ yields an orthonormal basis of eigenfunctions for $A$, allowing us to define the fractional powers $A^\gamma$ for $\gamma \in \mathbb{R}$ (see Kruse~\cite[Appendix~B.2]{MR3154916}). The corresponding fractional spaces are defined as $\dot{H}^\gamma \coloneqq \mathcal{D}(A^{\gamma/2})$, equipped with the norm $\|u\|_{\dot{H}^\gamma} \coloneqq \|A^{\gamma/2}u\|_H$. Let $\beta \in (0,2)$ be a given noise regularity parameter. We define the Nemytskii operators $F \colon [0,T] \times H \to \dot{H}^{-1}$ and $G \colon [0,T] \times H \to \mathrm{HS}(U_0, \dot{H}^{\beta-1})$ by 
\begin{align}
    (F(t, \phi))(x) &\coloneqq f(t, x, \phi(x)) - \boldsymbol{b}(x) \cdot \nabla \phi(x) - c(x) \phi(x), \label{eq:Nemytskii_F} \\[1ex]
    (G(t, \phi)\psi)(x) &\coloneqq g(t, x, \phi(x)) \psi(x), \label{eq:Nemytskii_G}
\end{align}
for all $t \in [0,T]$, $x \in \mathcal{O}$, $\phi \in H$, and $\psi \in U_0$. This permits the SPDE \eqref{eq:SPDE} to be recast as the following abstract stochastic evolution equation for the $H$-valued stochastic process $X(t) \coloneqq u(t, \cdot)$:
\begin{equation}\label{eq:SEE}
\begin{aligned}
    dX(t) + A X(t) \, dt &= F(t, X(t)) \, dt + G(t, X(t)) \, dW(t), \quad t \in (0, T], \\
    X(0) &= X_0.
\end{aligned}
\end{equation}
where $X_0$ is $\mathcal{F}_0$-measurable and $F$ and $G$ are the nonlinear drift and diffusion operators defined above. The precise assumptions on the coefficients are detailed in Section~\ref{sec:assumptions}.

The existence, uniqueness, and regularity of an $H$-valued mild solution to \eqref{eq:SEE} are well established in the literature (see, e.g., the standard monographs \cite{MR2329435, MR2856611, MR3154916, MR3236753, MR3308418, MR3410409}). The solution process, denoted by $X \colon [0,T] \times \Omega \to H$, is formulated through the following stochastic integral equation:
\begin{equation}\label{eq:mild_solution_SEE}
    X(t) = S(t)X_{0} + \int_{0}^{t} S(t-\sigma)F(\sigma,X(\sigma))\,d\sigma + \int_{0}^{t} S(t-\sigma)G(\sigma,X(\sigma))\,dW(\sigma).
\end{equation}
While existence and uniqueness is well established, deriving convergence rates for numerical approximations demands precise spatial and temporal regularity results (for a detailed overview of SPDE regularity, see, e.g., \cite{MR2852200, MR2968672, MR3154916}). A critical nuance in SPDEs with multiplicative noise is the maximal spatial regularity barrier. Although SPDEs driven by additive noise can achieve $\dot{H}^2$ spatial regularity, multiplicative noise fails to reach this limit under standard Lipschitz and linear growth assumptions (see \cite{MR2852200, MR2968672}). This barrier arises because the mild solution is at most Hölder continuous in time with an exponent less than $1/2$. This lack of temporal regularity is insufficient to compensate for the non-integrable $(t-\sigma)^{-1}$ singularity generated by the analytic semigroup inside the stochastic convolution (see Theorem~\ref{thm:singular_regularity}). However, $\dot{H}^2$ spatial regularity for multiplicative noise can be achieved under stronger assumptions on the diffusion coefficient \(G\). Section~\ref{sec:assumptions} provides the well-posedness and base regularity of the mild solution (Theorem~\ref{thm:base_regularity}) based on our earlier work \cite{NaikTripathi2026}. Building on these results, Section~\ref{sec:regularity_results} establishes the sharp spatial and temporal regularity bounds (Theorem~\ref{thm:singular_regularity}) essential for the numerical error analysis with nonsmooth initial data.

Numerical approximations of the SPDE \eqref{eq:SEE} have gained significant attention in the last two decades, as exact analytical solutions are rarely available. This has led to an extensive body of literature on spatial and temporal discretizations \cite{MR1637047, MR1683281, MR1873517, MR1953619, MR2182132, MR4050540}. Strong error estimates for finite element approximations have been widely investigated \cite{MR2211047, MR2182132, MR3047942, MR3168284, MR3274889, MR3872387, MR4876549, MR3761284}. For multiplicative noise, a fundamental limitation is that the temporal convergence rate of the Euler--Maruyama scheme is capped at $O(k^{1/2})$. In the context of Galerkin finite element methods, Yan~\cite{MR2182132} derived strong convergence rates of $O(h^\beta + k^{\gamma/2})$ (for $0 \le \gamma < \beta \le 1$). Subsequently, Kruse~\cite{MR3168284} established optimal strong rates of $O(h^{1+r} + k^{1/2})$ for $r \in [0,1)$. More recently, Tambue and Mukam~\cite{MR3872387} extended this framework to non-self-adjoint operators. Crucially, the strong convergence results established in \cite{MR2182132, MR3168284, MR3872387} are all restricted to trace-class noise.

Mathematical models of physical phenomena, such as phase transitions or localized thermal diffusion, frequently require the use of nonsmooth initial conditions, including indicator functions. The presence of these initial singularities typically leads to a reduction in the provable order of convergence for numerical schemes. To circumvent this order reduction in deterministic settings, finite element error analyses are conducted by relying on the smoothing properties of the analytic semigroup (see, e.g., \cite{MR906175, MR906176, MR970700, MR2249024}). Recovering optimal rates of finite element methods for SPDEs with nonsmooth initial data is nontrivial. To the best of our knowledge, such analysis has only been achieved for SPDEs driven by additive noise \cite{naik2026optimalerrorestimatesfinite}. The combined challenge of nonsmooth initial data and multiplicative space-time white noise remains unaddressed. We address this gap in the present article by establishing sharp spatial and temporal convergence rates for semilinear SPDEs subject to nonsmooth initial data. Our analytical approach relies on decoupling the noise regularity parameter $\beta$ from the initial data regularity parameter $\mu$, unlike existing works that impose the constraint $\beta = \mu$ \cite{MR2182132, MR3168284, MR3872387}. By doing so, we capture the exact loss of initial regularity within an integrable temporal weight that blows up as $t \to 0$, thereby obtaining sharp convergence rates for $t > 0$.

To illustrate the physical applicability of Assumptions~\ref{ass:diffusion} and \ref{ass:drift}, we consider the modified Langmuir nonlinearity and the parabolic Anderson model. Originally developed in 1918, the deterministic Langmuir model describes the adsorption of molecules from a fluid to a solid surface using the fractional surface coverage function $f(u) = \frac{u}{1+u}$ (see Langmuir~\cite{langmuir1918adsorption}). However, in a stochastic model, random noise can push the concentration into negative values, which is physically impossible; thus, we consider the modified Langmuir fractional surface coverage function $f(u) = \frac{u}{1+|u|}$ to preserve symmetry and prevent unbounded growth (see Mukam and Tambue \cite{10.1093/imanum/draf152}). This modified drift satisfies our Assumption~\ref{ass:drift}. A second, independent difficulty in the multiplicative noise setting concerns the space in which the diffusion coefficient $G$ is posed. Standard analyses require the diffusion coefficient $G$ to satisfy Lipschitz and linear growth conditions as a mapping into $\mathrm{HS}(U_0, H)$ (see, e.g., \cite{MR2182132, MR3168284, MR3872387, MR3761284}). This restriction forces the covariance operator $Q$ to be of trace class. For space-time white noise ($Q = I$), a Nemytskii-type multiplication operator is never Hilbert--Schmidt into $H$. Consequently, the standard hypothesis cannot accommodate space-time white noise. We overcome this barrier in Assumption~\ref{ass:diffusion} by letting $G$ take values in $\mathrm{HS}(U_0, \dot{H}^{\beta-1})$, where $\beta \in (0,2)$ characterizes the spatial regularity of the noise. Evaluating the operator in a space of negative order introduces decaying spectral weights that keep the Hilbert--Schmidt norms finite even for $Q=I$. Under the formulation \eqref{eq:Nemytskii_G}, this framework allows us to handle standard test models from the SPDE literature. Specifically, we can accommodate bounded diffusion coefficients such as $g(u) = \frac{u}{1+u^2}$ and $g(u) = \frac{1-u^2}{1+u^2}$, as well as the linear diffusion $g(u) = u$ of the parabolic Anderson model (see, e.g., Section~4 in Jentzen and R\"ockner \cite{MR3320928}). The parabolic Anderson model characterizes the transport of a substance through a random medium or the evolution of a population within a random environment (see \cite{MR1185878}). Crucially, our framework allows us to simulate these physically relevant models under both trace-class and space-time white noise (see Section~\ref{sec:numerical_results}).

The primary contributions of this article are outlined below. Under the nonsmooth initial condition \(X_0\in L^p(\Omega;\dot H^\mu)\), with \(\mu\in(0,2]\),

\begin{itemize}[leftmargin=4em]
\item sharp spatial and temporal regularity estimates are established for the mild solution of \eqref{eq:SEE}; see Theorem~\ref{thm:singular_regularity}.

\item for the fully discrete approximation obtained by combining the standard finite element method with the linearly implicit Euler scheme, the strong convergence estimate
\[
O\!\big(h^{\beta-\varepsilon}
+k^{\frac12\min\{\beta-\varepsilon,1\}}\big)
\]
is established for every fixed \(t>0\) and \(\varepsilon\in(0,\beta)\); see Theorem~\ref{thm:strong_convergence_of_fully_discrete_scheme}. In contrast to \cite{MR2182132, MR3168284, MR3872387}, where the regularity of the initial data is tied to that of the noise, the present framework allows the initial-data regularity \(\mu\in(0,2]\) and the noise regularity \(\beta\in(0,2)\) to be prescribed independently. In addition, the analysis is carried out under weaker assumptions on the drift and diffusion coefficients \(F\) and \(G\) than those imposed in \cite{MR2182132, MR3168284, MR3308418, MR3872387}. 

\item numerical experiments are presented to verify the predicted spatial and temporal convergence rates; see Section~\ref{sec:numerical_results}.
\end{itemize}

The remainder of the paper is organized as follows. Section~\ref{sec:assumptions} provides the mathematical setting and assumptions. Section~\ref{sec:fullydiscrete_approximation} formulates the fully discrete schemes and states our main result (Theorem~\ref{thm:strong_convergence_of_fully_discrete_scheme}). Section~\ref{sec:regularity_results} derives the required regularity estimates and Section~\ref{sec:proof_of_fully_discrete_result} the proof of the main theorem, followed by numerical experiments in Section~\ref{sec:numerical_results}.

\section{Setting and assumptions}\label{sec:assumptions}
Throughout this article, we retain the functional setting introduced in Section~\ref{sec:introduction}. We denote by $\mathcal{L}(H)$ the space of bounded linear operators on $H$, equipped with the standard operator norm $\|\cdot\|_{\mathcal{L}(H)}$. The letter $C$ represents a generic positive constant whose exact value may change from line to line, with any essential parameter dependencies indicated explicitly.

To guarantee the existence of a unique solution to \eqref{eq:SEE} and to establish the strong error estimates, we impose the following assumptions.

\begin{assumption}\label{ass:initial_data}
Let $p \ge 2$ and $\mu \in (0,2]$. Let the initial data $X_0 \colon \Omega \to H$ be measurable from the measurable space $(\Omega, \mathcal{F}_0)$ to $(H, \mathcal{B}(H))$, and satisfy
\[
    X_0 \in L^p\bigl(\Omega; \dot{H}^{\mu}\bigr).
\]
\end{assumption}

\begin{assumption}\label{ass:diffusion}
Let $\beta \in (0,2)$ be a parameter characterizing the spatial regularity of the noise. We assume the mapping $G \colon [0,T] \times H \to \mathrm{HS}(U_0, \dot{H}^{\beta-1})$ satisfies the following conditions: there exists a constant $C > 0$ such that for all $t, s \in [0,T]$,
\begin{align}
    \|G(t,u) - G(s,v)\|_{\mathrm{HS}(U_0, \dot{H}^{\min(0,\beta-1)})} &\leq C \bigl( |t-s|^{\beta/2} + \|u-v\|_H \bigr), &&\text{for all } u, v \in H, \\
    \|G(t,u)\|_{\mathrm{HS}(U_0, \dot{H}^{\beta-1})} &\leq C \bigl(1 + \|u\|_H\bigr), &&\text{if } 0 < \beta \leq 1 \text{ and } u \in H, \\
    \|G(t,u)\|_{\mathrm{HS}(U_0, \dot{H}^{\beta-1})} &\leq C \bigl(1 + \|u\|_{\dot{H}^{\beta-1}}\bigr), &&\text{if } 1 < \beta < 2 \text{ and } u \in \dot{H}^{\beta-1}.
\end{align}
\end{assumption}

\begin{assumption}\label{ass:drift}
For the parameter $\beta \in (0,2)$ given in Assumption~\ref{ass:diffusion}, the mapping $F \colon [0,T] \times H \to \dot{H}^{-1}$ satisfies the following conditions: there exists a constant $C>0$ such that for all $t, s \in [0,T]$:
\begin{align}
    \|F(t,u)-F(s,v)\|_{\dot{H}^{-1}} &\le C\,\left( |t-s|^{\beta/2} + \|u-v\|_H\right), &&\text{for all } u, v \in H, \\
    \|F(t,v)\|_{\dot{H}^{-1}} &\le C\bigl(1 + \|v\|_H\bigr), &&\text{for all } v \in H.
\end{align}
Additionally, if $1 < \beta < 2$, $F$ satisfies the following:
\begin{align}
    \|F(t,u)-F(s,v)\|_{\dot{H}^{\beta-2}} &\le C\,\left( |t-s|^{\beta/2} + \|u-v\|_{\dot{H}^{\beta-1}}\right), &&\text{for all } u, v \in \dot{H}^{\beta-1}, \\
    \|F(t,v)\|_{\dot{H}^{\beta-2}} &\le C\bigl(1 + \|v\|_{\dot{H}^{\beta-1}}\bigr), &&\text{for all } v \in \dot{H}^{\beta-1}.
\end{align}
\end{assumption}

\begin{remark}\label{rem:weaker_assumptions}
\mbox{}
\begin{enumerate}[label={\upshape(\roman*)}]
    \item In Assumption~\ref{ass:drift}, we relax the condition on the drift term $F$ by allowing it to map into the negative-order space $\dot{H}^{-1}$. This broadens standard assumptions found in the literature, which typically restrict $F$ to $H$ or $\dot{H}^{-1+r}$ with $r \ge 0$ (see, e.g., \cite{MR3168284, MR3872387}), and allows us to include non-self-adjoint operators.

    \item In Assumption~\ref{ass:diffusion}, we permit the diffusion coefficient $G$ to take values in $\mathrm{HS}(U_0, \dot{H}^{\beta-1})$ rather than the standard space $\mathrm{HS}(U_0, H)$ (see, e.g., \cite{MR3168284, MR3308418, MR3872387}). This enables us to accommodate multiplicative space-time white noise.
\end{enumerate}
\end{remark}

The following well-posedness and regularity results follow directly from the abstract framework developed in \cite{NaikTripathi2026}.

\begin{theorem}[Well-posedness and regularity]
\label{thm:base_regularity}
Under Assumptions~\ref{ass:initial_data}--\ref{ass:drift}, problem~\eqref{eq:SEE} admits a unique mild solution $X$. Furthermore, for $p \in [2, \infty)$, the following properties hold:
\begin{enumerate}[label={\upshape(\roman*)}]
    \item There exists a constant $C_1 > 0$ depending on $T$, such that
    \begin{equation}\label{eq:base_spatial_regularity}
        \sup_{t \in [0,T]} \|X(t)\|_{L^p(\Omega; H)} \le C_1 \bigl( 1 + \|X_0\|_{L^p(\Omega; H)} \bigr).
    \end{equation}
    
    \item For any $0 \le s < t \le T$, there exists a constant $C_2 > 0$ depending on $T$ and $X_0$ such that
   \begin{equation}\label{eq:base_temporal_regularity}
        \|X(t) - X(s)\|_{L^p(\Omega; H)} \le C_2 \, (t-s)^{\delta} \quad \text{for } 0 < \delta < \frac{1}{2}\min(1, \mu, \beta).
    \end{equation}
\end{enumerate}
\end{theorem}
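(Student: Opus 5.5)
The plan is to follow the standard Banach fixed-point strategy in the space $\mathcal{X}_p$ of predictable $H$-valued processes with $\sup_{t\in[0,T]}\|X(t)\|_{L^p(\Omega;H)}<\infty$. The norm will be weighted by $e^{-\lambda t}$ if needed. The map will be
\[
\Phi(X)(t)=S(t)X_0+\int_0^t S(t-\sigma)F(\sigma,X(\sigma))\,d\sigma+\int_0^t S(t-\sigma)G(\sigma,X(\sigma))\,dW(\sigma).
\]

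\textbf{Mapping properties and contraction.} The key tools are the smoothing estimates $\|A^{\rho}S(t)\|_{\mathcal{L}(H)}\le Ct^{-\rho}$ and $\|A^{-\rho}(I-S(t))\|_{\mathcal{L}(H)}\le Ct^{\rho}$ for $\rho\in[0,1]$. For the drift term I write $S(t-\sigma)F=A^{1/2}S(t-\sigma)A^{-1/2}F$ and use Assumption~\ref{ass:drift}, which produces the integrable kernel $(t-\sigma)^{-1/2}$. For the stochastic term I apply the Burkholder--Davis--Gundy inequality in $L^p$. In the case $\beta\le1$ I write $S(t-\sigma)G=A^{(1-\beta)/2}S(t-\sigma)A^{(\beta-1)/2}G$, so that the squared Hilbert--Schmidt norm carries the kernel $(t-\sigma)^{-(1-\beta)}$, which is integrable because $\beta>0$. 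In the case $\beta>1$ the Lipschitz condition is taken in $\mathrm{HS}(U_0,H)$, since $\min(0,\beta-1)=0$, so the contraction estimate is immediate. For the growth bound when $\beta>1$, I first obtain the Lipschitz/growth bound in $H$, using $\|G\|_{\mathrm{HS}(U_0,H)}\le\|G(t,0)\|+C\|u\|_H$. Then the uniqueness argument together with a Gronwall inequality with weakly singular kernel gives the contraction, and this yields a unique mild solution and estimate (i).

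\textbf{Temporal regularity (ii).} I split $X(t)-X(s)$ into three pieces.
\begin{itemize}
\item The initial-data piece $(S(t-s)-I)S(s)X_0$. Using $X_0\in\dot H^{\mu}$, it is bounded by $C(t-s)^{\min(\mu,2)/2}$. This is capped by $(t-s)^{\min(\mu/2,1)}$.
\item The drift increments. I split them into $\int_s^t$ and $\int_0^s(S(t-s)-I)S(s-\sigma)$. With the smoothing estimates these give $(t-s)^{1/2-\epsilon}$.
\item The stochastic increments, split the same way and estimated with BDG. These give $(t-s)^{\min(\beta,1)/2-\epsilon}$. Here I use $\|A^{-\rho}(I-S)\|\le C(t-s)^{\rho}$ with $\rho<\beta/2$ and $\rho\le1/2$, so that the kernel $(s-\sigma)^{-(1-\beta)-2\rho}$ remains integrable, with (i) controlling the moments.
\end{itemize}
Combining the three pieces gives any $\delta<\tfrac12\min(1,\mu,\beta)$.

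\textbf{Main obstacle.} The main difficulty is the case $1<\beta<2$. Assumption~\ref{ass:diffusion} gives linear growth only in terms of $\|u\|_{\dot H^{\beta-1}}$, and Assumption~\ref{ass:drift} only in $\dot H^{\beta-2}$, so to use the stronger noise regularity one needs a priori bounds for $X$ in $\dot H^{\beta-1}$. When $\mu<\beta-1$ this norm blows up at $t=0$. I would first try to run a second fixed point in $\dot H^{\beta-1}$ with the weighted norm $\sup_t t^{(\beta-1-\mu)^+/2}\|X(t)\|_{L^p(\Omega;\dot H^{\beta-1})}$. The singular weight is integrable since $\beta-1-\mu<1$, and Gronwall-type estimates with weights of the form $\int_0^t(t-\sigma)^{-a}\sigma^{-b}\,d\sigma$ close the argument. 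However, the statement (ii) only needs $H$-norms and rates capped at $1/2$, so this step can probably be bypassed by working purely with the $\mathrm{HS}(U_0,H)$ bounds. That is the route I would take, deferring the finer estimates to \cite{NaikTripathi2026}.
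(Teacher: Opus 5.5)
Your argument is correct, but it takes a different route from the paper. The paper proves nothing directly. It invokes the abstract theorem \cite[Theorem~2.1]{NaikTripathi2026}, taking the base space $H$ so that the solution space is the set of predictable processes in $\mathcal{C}([0,T];L^p(\Omega;H))$. For (ii) it inserts into that theorem's exponent formula the initial-data index $\min(1,\mu)$, the drift codomain index $1$ and the diffusion codomain index $(1-\beta)^+$, which produces $\delta<\frac12\min(1,\mu,\beta)$.

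You instead rebuild everything by hand:
\begin{itemize}
\item a Banach fixed point in a weighted $\sup_t\|\cdot\|_{L^p(\Omega;H)}$ norm;
\item the kernel $(t-\sigma)^{-1/2}$ coming from $F\in\dot H^{-1}$;
\item the kernel $(t-\sigma)^{-(1-\beta)^+}$ coming from BDG;
\item the standard three-piece splitting of $X(t)-X(s)$.
\end{itemize}
Your key observation for $1<\beta<2$ is the right one. The first condition of Assumption~\ref{ass:diffusion}, with $\min(0,\beta-1)=0$, together with $G(0,0)\in\mathrm{HS}(U_0,\dot H^{\beta-1})\hookrightarrow\mathrm{HS}(U_0,H)$, makes $G$ globally Lipschitz with linear growth into $\mathrm{HS}(U_0,H)$. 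The $\dot H^{\beta-1}$/$\dot H^{\beta-2}$ conditions are therefore irrelevant for this base result. The paper encodes exactly this implicitly by choosing diffusion index $(1-\beta)^+=0$.

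Your route is transparent: it shows which term produces each entry of $\min(1,\mu,\beta)$, and that only $H$-level hypotheses are used. The paper's route is brief and inherits the measurability and continuity details from the cited framework. These are the predictable versions of the convolutions and mean-square continuity, which your sketch takes for granted; they are standard.

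One small correction for $1<\beta<2$. Once you work with $\mathrm{HS}(U_0,H)$ bounds, the kernel in the term $\int_0^s (S(t-s)-I)S(s-\sigma)G\,dW$ is $(s-\sigma)^{-2\rho}$, not $(s-\sigma)^{-(1-\beta)-2\rho}$. That second form would require $\|X\|_{\dot H^{\beta-1}}$, which part (i) does not control. You therefore need $\rho<1/2$ strictly rather than $\rho\le 1/2$. This still gives $(t-s)^{1/2-\epsilon}$, so the conclusion is unaffected.
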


\begin{proof}
The proof follows directly from \cite[Theorem~2.1]{NaikTripathi2026}, as Assumptions~\ref{ass:initial_data}--\ref{ass:drift} satisfy the required hypotheses of the theorem. (i) By setting the abstract base spaces in \cite{NaikTripathi2026} to $H$ (corresponding to parameters $\mu=0$ and $\nu=0$), the solution space $\mathbb{V}_p$ coincides with the space of predictable processes in $\mathcal{C}([0,T]; L^p(\Omega; H))$. The uniform spatial bound \eqref{eq:base_spatial_regularity} is thus an immediate consequence of the corresponding $\mathbb{V}_p$-norm estimate.

(ii) To establish the temporal regularity \eqref{eq:base_temporal_regularity}, we utilize the initial data condition $X_0 \in L^p(\Omega; \dot{H}^\mu)$. By connecting this to the global H\"older continuity framework in \cite[Theorem~2.1(iii)]{NaikTripathi2026} (specifically setting the initial data regularity parameter $\varepsilon= \min(1, \mu)$, the drift codomain index to $\alpha=1$, and the diffusion codomain index to $(1-\beta)^+$), the theorem's general exponent formula evaluates to $0 < \delta < \min\bigl\{\frac{1}{2}\min(1, \mu), \frac{1}{2}, \frac{1}{2}\min(1, \beta)\bigr\}$, which simplifies directly to $\frac{1}{2}\min(1, \mu, \beta)$. This yields \eqref{eq:base_temporal_regularity} and concludes the proof.
\end{proof}

\section{Fully discrete approximation}\label{sec:fullydiscrete_approximation}
To numerically approximate \eqref{eq:SEE} via a standard Galerkin finite element method, let $\{V_h\}_{h \in (0,1]}$ be a quasi-uniform family of finite-dimensional subspaces of $\dot{H}^1$ consisting of continuous, piecewise linear functions defined over a regular triangulation of $\mathcal{O}$, with $h$ denoting the maximum mesh size. Following Kruse~\cite[Section~3]{MR3168284}, we define the generalized orthogonal projector $P_h \colon \dot{H}^{-1} \to V_h$ by 
\begin{equation}
    ( P_h x, y_h )_H = \langle x, y_h \rangle_{\dot{H}^{-1}, \dot{H}^1} \quad \text{for all } x \in \dot{H}^{-1}, y_h \in V_h,
\end{equation}
where $\langle \cdot, \cdot \rangle_{\dot{H}^{-1}, \dot{H}^1}$ denotes the duality pairing between $\dot{H}^{-1}$ and $\dot{H}^1$. Furthermore, let $A_h \colon V_h \to V_h$ be the discrete analogue of the operator $A$, defined by
\begin{equation}
    ( A_h x_h, y_h )_H = ( A^{1/2} x_h, A^{1/2} y_h )_H \quad \text{for all } x_h, y_h \in V_h.
\end{equation}
Since $A_h$ is self-adjoint and positive definite on the finite-dimensional space $V_h$, the operator $-A_h$ generates an analytic semigroup of contractions on $V_h$, denoted by $S_h(t) = e^{-t A_h}$. Projecting the continuous problem~\eqref{eq:SEE} onto the finite-dimensional subspace $V_h$ yields the following spatially semidiscrete initial value problem for the adapted process $X_h \colon [0,T]\times\Omega \to V_h$:
\begin{equation}\label{eq:SEE_semidiscrete}
\begin{aligned}
    dX_h(t) + A_h X_h(t)\,dt &= P_h F(t,X_h(t))\,dt + P_h G(t,X_h(t))\,dW(t), \quad t \in (0,T], \\
    X_h(0) &= P_h X_0.
\end{aligned}
\end{equation}

For the temporal discretization, we apply the linearly implicit Euler method to the semidiscrete problem~\eqref{eq:SEE_semidiscrete}. Let $N \in \mathbb{N}$ and introduce a uniform time step $k \coloneqq T/N$, defining the discrete time grid $t_n \coloneqq nk$ for $n=0,1,\dots,N$. The resulting fully discrete approximation $X_h^n \approx X(t_n)$ is generated by the recursion
\begin{equation}\label{eq:recursion}
    X_h^{n} = S_{h,k}X_h^{n-1} + k\,S_{h,k} P_h F(t_{n-1},X_h^{n-1}) + S_{h,k}P_h G(t_{n-1},X_h^{n-1})\,\Delta W^{n}, \quad n=1,\dots,N,
\end{equation}
with the initial condition $X_h^0 = P_h X_0$, where $S_{h,k} \coloneqq (I + k A_h)^{-1}$ and $\Delta W^n \coloneqq W(t_{n}) - W(t_{n-1})$. 

The discrete operator $S_{h,k}$ satisfies a standard smoothing property (see, e.g., Kruse~\cite[Eq.~(4.8)]{MR3168284}). For any $\rho \in [0,1]$, there exists a constant $C > 0$, independent of $h$, $k$, and $n$, such that 
\begin{equation}\label{eq:discrete_smoothing_estimate}
    \|A_h^{\rho} S_{h,k}^{n} x_h\|_H \le C t_{n}^{-\rho} \|x_h\|_H \quad \text{for all } x_h \in V_h \text{ and } n = 1, \dots, N.
\end{equation}
By an interpolation argument (cf.\ Kruse~\cite[Eqs.~(4.8) and (4.12)]{MR3168284}), this bound extends to negative norms evaluated at discrete time steps. For any $\eta \in [0, 1]$, there exists a constant $C>0$, independent of $h$, $k$, and $j$, such that
\begin{equation}\label{eq:negative_norm_smoothing_fully_discrete}
    \|S_{h,k}^j P_h x\|_H \le C t_j^{-\eta/2} \|x\|_{\dot{H}^{-\eta}} \quad \text{for all } x \in \dot{H}^{-\eta} \text{ and } j = 1, \dots, N.
\end{equation}
By iteratively expanding the recursion~\eqref{eq:recursion}, the fully discrete solution admits a discrete variation of constants formulation. Specifically, $X_h^n$ can be expressed explicitly as
\begin{equation}\label{eq:mild_fully_discrete}
    X_h^n = S_{h,k}^n\,X_h^0 
    + k \sum_{i=0}^{n-1} S_{h,k}^{\,n-i}\,P_h\,F(t_i,X_h^i) 
    + \sum_{i=0}^{n-1} S_{h,k}^{\,n-i}\,P_h\,G(t_i,X_h^i)\,\Delta W^{i+1}, \quad n=1,\dots,N.
\end{equation}

We now state our primary strong error estimate result for the fully discrete scheme.

\begin{theorem}[Strong error estimate for the fully discrete scheme]\label{thm:strong_convergence_of_fully_discrete_scheme}
Suppose that Assumptions~\ref{ass:initial_data}--\ref{ass:drift} hold. Let $p \in [2,\infty)$, $\beta \in (0,2)$, and $\mu \in (0,2]$ be such that $\mu > \beta-1$. Define $\nu \coloneqq \min\{\beta,\mu\}$. Then, for any arbitrarily small $\varepsilon \in (0, \beta)$, there exists a constant $C>0$, independent of the discretization parameters $h, k$ and the discrete time index $n \in \{1,\dots,N\}$, such that for all $n=1,\dots,N$,
\begin{equation}\label{eq:main-convergence}
    \|X(t_n) - X_h^n\|_{L^p(\Omega;H)} \le C \left(h^{\beta-\varepsilon} + k^{\frac{1}{2}\min\{\beta-\varepsilon, 1\}}\right) t_{n}^{-\frac{\beta-\nu}{2}} \left(1 + \|X_{0}\|_{L^{p}(\Omega;\dot H^{\nu})}\right),
\end{equation}
where $X(t_n)$ is the mild solution of \eqref{eq:SEE} evaluated at $t_n$, and $X_h^n$ is the fully discrete approximation defined by \eqref{eq:mild_fully_discrete}.
\end{theorem}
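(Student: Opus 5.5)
We compare the mild formulation \eqref{eq:mild_solution_SEE} at $t=t_n$ with the discrete variation-of-constants formula \eqref{eq:mild_fully_discrete}. We split the error $E^n \coloneqq X(t_n)-X_h^n$ as $E^n = E_0^n + E_F^n + E_G^n$. Here
\[
E_0^n = \bigl(S(t_n)-S_{h,k}^nP_h\bigr)X_0
\]
is the initial-data error. For the drift we write
\[
E_F^n = \sum_{i=0}^{n-1}\int_{t_i}^{t_{i+1}}\Bigl[S(t_n-\sigma)F(\sigma,X(\sigma)) - S_{h,k}^{n-i}P_hF(t_i,X_h^i)\Bigr]\,d\sigma ,
\]
and $E_G^n$ is the analogous stochastic term. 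Inside each of $E_F^n$ and $E_G^n$ we insert intermediate quantities and split further into three pieces: (a) the deterministic error operator $E_{h,k}(t_n-\sigma)\coloneqq S(t_n-\sigma)-S_{h,k}^{n-i}P_h$ for $\sigma\in[t_i,t_{i+1})$, applied to the exact coefficient; (b) $S_{h,k}^{n-i}P_h$ applied to $F(\sigma,X(\sigma))-F(t_i,X(t_i))$, a temporal-regularity term; (c) $S_{h,k}^{n-i}P_h$ applied to $F(t_i,X(t_i))-F(t_i,X_h^i)$, a Lipschitz term in $E^i$. The stochastic pieces are handled identically, using the Burkholder--Davis--Gundy inequality to convert $L^p(\Omega;H)$ norms into $L^p$ norms of square-integrals of $\mathrm{HS}$ norms.

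\textbf{Step 1: deterministic error operator estimates.} Following Kruse~\cite{MR3168284} and the nonsmooth-data theory \cite{MR906175,MR2249024}, I would use the bound
\[
\|E_{h,k}(t)x\|_H \le C\,(h^{r}+k^{r/2})\,t^{-(r-s)/2}\|x\|_{\dot H^{s}}, \qquad 0\le r\le 2,\ -1\le s\le r,
\]
together with the pointwise-in-$\sigma$ version valid for $\sigma\in[t_i,t_{i+1})$. For $E_0^n$ I would take $s=\nu$ and $r=\beta-\varepsilon$ (restricting $r\le1$ in the $k$-part where needed). This yields $(h^{\beta-\varepsilon}+k^{\frac12\min\{\beta-\varepsilon,1\}})\,t_n^{-(\beta-\nu)/2}\|X_0\|_{\dot H^\nu}$; the $\varepsilon$ slack can be absorbed because $\nu\le\beta$.

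\textbf{Step 2: regularity-dependent terms.} For the (a)-pieces with $F$ I take $s=-1$ and $r=\beta-\varepsilon$, which gives the integrable singularity $(t_n-\sigma)^{-(\beta-\varepsilon+1)/2}$. When $\beta>1$ this is not integrable, so there I instead use the $\dot H^{\beta-2}$ bounds of Assumption~\ref{ass:drift}. These require $X(\sigma)\in\dot H^{\beta-1}$ with a singular weight $\sigma^{-\max(0,\beta-1-\mu)/2}$, and this weight is harmless because $\mu>\beta-1$. The corresponding stochastic (a)-piece uses $s=\beta-1$, giving $\int (t_n-\sigma)^{-(1-\varepsilon)}d\sigma<\infty$, together with the linear growth in Assumption~\ref{ass:diffusion}. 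For the (b)-pieces I would use the sharp temporal regularity of Theorem~\ref{thm:singular_regularity}, which is weighted: $\|X(\sigma)-X(t_i)\|\le C k^{\beta/2-\varepsilon}t_i^{-(\beta-\nu)/2}$ or a similar bound, in place of the weaker global H\"older estimate of Theorem~\ref{thm:base_regularity}. This is combined with the Hölder-in-time parts of the assumptions and the smoothing bound \eqref{eq:negative_norm_smoothing_fully_discrete} with $\eta=1$ for $F$ and $\eta=(1-\beta)^+$ for $G$. The $i=0$ interval must be treated separately via the spatial regularity bound, since the weight blows up at $t_0$.

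\textbf{Step 3: discrete Gronwall.} The (c)-pieces give
\[
\|E^n\|_{L^p(\Omega;H)}\le R_n + C\Bigl(k\sum_{i<n}t_{n-i}^{-1/2}\|E^i\|^2\Bigr)^{1/2}+Ck\sum_{i<n}t_{n-i}^{-1/2}\|E^i\|,
\]
where $R_n$ is $(h^{\beta-\varepsilon}+k^{\frac12\min\{\beta-\varepsilon,1\}})t_n^{-(\beta-\nu)/2}(1+\|X_0\|)$. Here we need \eqref{eq:negative_norm_smoothing_fully_discrete} with $\eta=1$ and the $H$-Lipschitz bounds. Squaring, I would apply a weakly singular discrete Gronwall lemma with weight $t_n^{-(\beta-\nu)}$. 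This is allowed because $\beta-\nu<1$: indeed $\beta-\nu\le\beta-\mu<1$ when $\mu<\beta$. The main obstacle is this step combined with Step 2: the singular weight must survive the Gronwall argument and the convolution sums $k\sum t_{n-i}^{-a}t_i^{-b}\le Ct_n^{1-a-b}$ without losing powers. For $\beta>1$ one must also control the discrete error in $\dot H^{\beta-1}$-type norms. I would try to avoid the latter by using only the $H$-Lipschitz properties for the (c)-terms, reserving the stronger assumptions for the exact solution.
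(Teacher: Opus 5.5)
Your proposal is correct and follows the paper's proof closely. It uses the same initial/drift/noise split, the same BDG plus discrete smoothing with $\eta=1$ for $F$ and $\eta=(1-\beta)^+$ for $G$, and treats the $i=0$ interval separately. It also uses the weighted temporal regularity of Theorem~\ref{thm:singular_regularity}(ii) for $i\ge 1$ and closes with the squared weakly singular discrete Gronwall lemma with $\theta=1-(\beta-\nu)>0$.

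\textbf{The one real difference: the drift error-operator term.} The paper adds and subtracts $F(t_n,X(t_n))$. It bounds $J_{F2}$ by Lemma~\ref{lem:operator_splitting} applied to the difference, using Theorem~\ref{thm:singular_regularity}(ii). It bounds the frozen term $J_{F3}$ by the integrated estimate of Lemma~\ref{lem:fully_discrete_error_estimates}(ii).

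You apply Lemma~\ref{lem:operator_splitting} directly to $F(\sigma,X(\sigma))$, with $\eta=-1$ for $\beta\le 1$ and $\eta=\beta-2$ for $\beta>1$. This gives the integrable kernels $(t_n-\sigma)^{-(1+\beta-\varepsilon)/2}$ and $(t_n-\sigma)^{-1+\varepsilon/2}$. It needs only $\sup_\sigma\|X(\sigma)\|_{L^p(\Omega;H)}$, respectively $\sup_\sigma\|X(\sigma)\|_{L^p(\Omega;\dot H^{\beta-1})}$; the latter is uniformly bounded by Theorem~\ref{thm:singular_regularity}(i) at order $\beta-1$, since $\mu>\beta-1$.

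Your route is valid and shorter. It dispenses with Lemma~\ref{lem:fully_discrete_error_estimates}(ii) and with time regularity in that term. The paper's add-and-subtract would in principle allow $\rho=\beta$ in $J_{F2}$, because the difference supplies an extra factor $(t_n-\sigma)^{\min\{\beta,1\}/2}$, giving an $\varepsilon$-free drift contribution. Since $J_{G3}$ costs an $\varepsilon$ anyway, your simplification loses nothing in the final estimate.

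\textbf{Small things to fix.}
\begin{enumerate}[label=(\arabic*)]
\item In the stochastic Lipschitz term of your Gronwall inequality, the weight is $t_{n-i}^{-(1-\beta)^+}$, not $t_{n-i}^{-1/2}$. For $\beta<1/2$ (space-time white noise) this is worse than $t_{n-i}^{-1/2}$. Its exponent is still below $1$, so Lemma~\ref{lem:discrete_gronwall} applies with $\alpha=1-\max\{1/2,(1-\beta)^+\}$.
\item Your Step~1 estimate holds only for $-\min\{1,2-r\}\le s\le r$, not for $-1\le s\le r$. All the choices you actually use satisfy this. For $E_0^n$ just take $r=\beta$, $s=\nu$: with $r=\beta-\varepsilon$ you violate $s\le r$ when $\nu=\beta$. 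No restriction $r\le 1$ is needed there.
\item Theorem~\ref{thm:singular_regularity}(ii) yields only $k^{\min\{\beta,1\}/2}$, not $k^{\beta/2-\varepsilon}$. This is exactly why the temporal rate caps at $\tfrac12$.
\item Your worry about $\dot H^{\beta-1}$ norms of the discrete error does not arise. Assumptions~\ref{ass:diffusion}--\ref{ass:drift} give $H$-Lipschitz bounds into $\dot H^{-1}$ and $\mathrm{HS}(U_0,\dot H^{\min\{0,\beta-1\}})$ for every $\beta$. That is how the paper closes the argument.
\end{enumerate}
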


To maintain the flow of the exposition, we postpone the proof to Section~\ref{sec:proof_of_fully_discrete_result} after establishing the necessary preliminary results.

\section{Regularity results}\label{sec:regularity_results}

In this section, we establish the spatial and temporal regularity estimates for the mild solution. The following two lemmas will be useful for both the regularity results and the subsequent error estimates. The first lemma provides a Burkholder--Davis--Gundy (BDG) type inequality for $H$-valued stochastic integrals, which will be used repeatedly to estimate the moments of stochastic convolutions.

\begin{lemma}[{\cite[Proposition~2.12]{MR3154916}}]\label{lem:BDG}
Let \( p \ge 2 \) and \( 0 \le t_1 < t_2 \le T \).
Let \( \Phi \colon [0,T]\times\Omega \to {\mathrm{HS}(U_0, H)} \) be a predictable process such that
\begin{equation*}
    \mathbb{E}\left[ \left( \int_{t_1}^{t_2} \|\Phi(\sigma)\|_{\mathrm{HS}(U_0, H)}^2\, d\sigma \right)^{p/2} \right] < \infty .
\end{equation*}
Then the stochastic integral \( \int_{t_1}^{t_2} \Phi(\sigma)\, dW(\sigma) \) is well defined and satisfies
\begin{equation*}
    \mathbb{E}\left[ \left\| \int_{t_1}^{t_2} \Phi(\sigma)\, dW(\sigma) \right\|_H^p \right] \le C_p\, \mathbb{E}\left[ \left( \int_{t_1}^{t_2} \|\Phi(\sigma)\|_{\mathrm{HS}(U_0, H)}^2\, d\sigma \right)^{p/2} \right],
\end{equation*}
where the constant $C_p$ is given by
\begin{equation*}
    C_p = \left(\frac{p(p-1)}{2}\right)^{p/2} \left(\frac{p}{p-1}\right)^{p\left(\frac{p}{2}-1\right)} .
\end{equation*}
\end{lemma}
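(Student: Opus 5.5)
The plan is to reproduce the classical Itô-formula proof of the Burkholder--Davis--Gundy inequality (as in Da Prato--Zabczyk, Lemma~7.2), tracking the constants so that the stated $C_p$ comes out exactly. Set $M(t)\coloneqq\int_{t_1}^{t}\Phi(\sigma)\,dW(\sigma)$ for $t\in[t_1,t_2]$. We work first with $\Phi$ elementary (or at least bounded in $\mathrm{HS}(U_0,H)$), so that $M$ is a continuous square-integrable $H$-valued martingale. To ensure all moments are finite, we localize with the stopping times $\tau_R\coloneqq\inf\{t\ge t_1:\|M(t)\|_H\ge R\}\wedge t_2$. For $p=2$ the claim is the Itô isometry with $C_2=1$, so we assume $p>2$.

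\textbf{Step 1 (Itô formula).} Apply Itô's formula to $f(x)=\|x\|_H^p$, which is $C^2$ for $p\ge2$. Its derivatives are
\[
Df(x)=p\|x\|_H^{p-2}x,\qquad D^2f(x)=p\|x\|_H^{p-2}I+p(p-2)\|x\|_H^{p-4}\,x\otimes x,
\]
so $\|D^2f(x)\|_{\mathcal L(H)}\le p(p-1)\|x\|_H^{p-2}$. Take expectations of the stopped process; the stochastic integral has mean zero because the integrand is bounded up to $\tau_R$. Since $\tfrac12\operatorname{Tr}\bigl(D^2f\,\Phi\Phi^*\bigr)\le\tfrac12\|D^2f\|\,\|\Phi\|_{\mathrm{HS}}^2$, this gives
\[
\mathbb E\|M(t)\|_H^p\le\frac{p(p-1)}{2}\,\mathbb E\!\int_{t_1}^{t}\|M(s)\|_H^{p-2}\|\Phi(s)\|_{\mathrm{HS}(U_0,H)}^2\,ds .
\]

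\textbf{Step 2 (Hölder and Doob).} Bound the integrand by $\sup_{s\le t}\|M(s)\|_H^{p-2}$ times $\int\|\Phi\|^2$, and apply Hölder's inequality with exponents $\tfrac{p}{p-2}$ and $\tfrac p2$:
\[
\mathbb E\|M(t)\|_H^p\le\frac{p(p-1)}{2}\Bigl(\mathbb E\sup_{s\le t}\|M(s)\|_H^p\Bigr)^{\frac{p-2}{p}}\Bigl(\mathbb E\Bigl(\int_{t_1}^{t}\|\Phi\|_{\mathrm{HS}}^2\,ds\Bigr)^{p/2}\Bigr)^{\frac2p}.
\]
Doob's maximal inequality for the submartingale $\|M\|_H$ gives $\mathbb E\sup_{s\le t}\|M(s)\|_H^p\le\bigl(\tfrac{p}{p-1}\bigr)^p\,\mathbb E\|M(t)\|_H^p$. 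Substituting, the right-hand side carries the factor $\bigl(\tfrac p{p-1}\bigr)^{p-2}\bigl(\mathbb E\|M(t)\|^p\bigr)^{(p-2)/p}$.

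\textbf{Step 3 (rearrange and pass to the limit).} Divide by $\bigl(\mathbb E\|M(t)\|^p\bigr)^{(p-2)/p}$, which is finite thanks to the localization, and raise both sides to the power $p/2$. This yields
\[
\mathbb E\|M(t)\|_H^p\le\Bigl(\frac{p(p-1)}{2}\Bigr)^{p/2}\Bigl(\frac{p}{p-1}\Bigr)^{p(\frac p2-1)}\,\mathbb E\Bigl(\int_{t_1}^{t}\|\Phi\|_{\mathrm{HS}}^2\,ds\Bigr)^{p/2},
\]
which is exactly $C_p$. We then let $R\to\infty$ using Fatou's lemma. Finally, for a general predictable $\Phi$ satisfying the integrability hypothesis, we approximate by elementary processes $\Phi_n$ converging in $L^p\bigl(\Omega;L^2(t_1,t_2;\mathrm{HS}(U_0,H))\bigr)$. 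The estimate just proved shows that the integrals $\int\Phi_n\,dW$ form a Cauchy sequence in $L^p(\Omega;H)$, which gives well-definedness of the stochastic integral, and the inequality passes to the limit.

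The main obstacle is the rigour in Step~3. Dividing by $\mathbb E\|M\|^p$ requires knowing a priori that it is finite, and Itô's formula needs $D^2f$ to be controlled near the origin when $p<4$, where $\|x\|^{p-4}x\otimes x$ is still bounded by $\|x\|^{p-2}$. Both points are handled by the stopping-time localization together with the bound on $\|D^2f\|$ from Step~1.
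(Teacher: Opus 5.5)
Your proposal is correct and matches the standard proof. The paper gives no proof of its own and simply cites Kruse's Proposition~2.12, and your chain (It\^o's formula for $\|x\|_H^p$, H\"older, Doob, then rearranging and removing the localization) is the classical Da Prato--Zabczyk argument behind that constant; your bookkeeping reproduces $C_p$ exactly. One minor remark: the final approximation by elementary processes is unnecessary, because $p\ge 2$ already gives $\mathbb{E}\int_{t_1}^{t_2}\|\Phi(\sigma)\|_{\mathrm{HS}(U_0,H)}^2\,d\sigma<\infty$, so the integral is defined by the $L^2$ theory and the stopping-time localization alone covers general predictable $\Phi$.
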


The following lemma collects several deterministic smoothing properties of the analytic $\mathcal{C}_0$-semigroup.

\begin{lemma}\label{lem:semigroup_estimates}
Let $\gamma^+ \coloneqq \max\{0,\gamma\}$ denote the positive part of $\gamma$, and let the operator $A$ and the corresponding analytic semigroup $\{S(t)\}_{t\ge 0}$ be as defined in Section~\ref{sec:introduction}. Then the following estimates hold:
\begin{enumerate}[label={\upshape(\roman*)}]
    \item For any $\gamma \in \mathbb{R}$, there exists a constant $C > 0$, depending on $\gamma$, such that
    \[
    \|A^{\gamma}S(t)\|_{\mathcal{L}(H)} \le C\, t^{-\gamma^+}, \quad t > 0.
    \]
    
    \item For any $\rho \in [0,1]$, there exists a constant $C > 0$, depending on $\rho$, such that
    \[
    \|A^{-\rho}(I-S(t))\|_{\mathcal{L}(H)} \le C\, t^{\rho}, \quad t > 0.
    \]
    
    \item For any $\rho \in [0,1]$, there exists a constant $C > 0$, depending on $\rho$, such that
    \[
    \int_{s}^{t} \|A^{\frac{\rho}{2}} S(t-\sigma)v\|_H^2 \,d\sigma \leq C \,(t - s)^{1-\rho} \|v\|_H^2 \quad \text{for all } v \in H, \; 0 \leq s < t.
    \]
    
    \item For any $\rho \in [0,1]$, there exists a constant $C > 0$, depending on $\rho$, such that
    \[
    \left\|A^\rho \int_{s}^{t} S(t-\sigma)v \, d\sigma \right\|_H \leq C \, (t - s)^{1-\rho} \|v\|_H \quad \text{for all } v \in H, \; 0 \leq s < t.
    \]   
\end{enumerate}
\end{lemma}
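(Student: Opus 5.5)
The statement to prove is Lemma~\ref{lem:semigroup_estimates}. The plan is to work entirely through the spectral decomposition of $A$. Since $A$ is self-adjoint and positive definite with compact inverse, there is an orthonormal eigenbasis $\{e_j\}$ of $H$ with eigenvalues $0<\lambda_1\le\lambda_2\le\cdots\to\infty$. Then $A^\gamma S(t)e_j=\lambda_j^\gamma e^{-t\lambda_j}e_j$, and every operator norm reduces to a supremum of a scalar function over $\lambda\ge\lambda_1$. No analytic-semigroup machinery beyond this is needed, because $A$ is self-adjoint; the first-order terms $\boldsymbol b\cdot\nabla$ and $c$ have been moved into $F$.

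For (i), first take $\gamma>0$. The bound $\lambda^\gamma e^{-t\lambda}=t^{-\gamma}(t\lambda)^\gamma e^{-t\lambda}\le t^{-\gamma}\sup_{x\ge0}x^\gamma e^{-x}=(\gamma/e)^\gamma t^{-\gamma}$ gives the claim. For $\gamma\le0$, we have $\lambda^\gamma e^{-t\lambda}\le\lambda_1^{\gamma}$, which is a constant, and this matches $t^{-\gamma^+}=1$.

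For (ii), we need to bound $\lambda^{-\rho}(1-e^{-t\lambda})$. The idea is to use $1-e^{-x}\le\min(1,x)\le x^\rho$ for $x\ge0$ and $\rho\in[0,1]$. This gives $\lambda^{-\rho}(1-e^{-t\lambda})\le\lambda^{-\rho}(t\lambda)^\rho=t^\rho$.

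For (iii), expand $v=\sum_j v_je_j$ and use Parseval together with Fubini/Tonelli:
\[
\int_s^t\|A^{\rho/2}S(t-\sigma)v\|_H^2\,d\sigma=\sum_j v_j^2\int_0^{t-s}\lambda_j^{\rho}e^{-2\tau\lambda_j}\,d\tau .
\]
It then suffices to show $\lambda^\rho\int_0^{r}e^{-2\tau\lambda}d\tau\le C r^{1-\rho}$ uniformly in $\lambda>0$. The integral equals $(1-e^{-2r\lambda})/(2\lambda)$, so the expression is $\tfrac12\lambda^{\rho-1}(1-e^{-2r\lambda})$. Bounding $1-e^{-2r\lambda}\le(2r\lambda)^{1-\rho}$ as in (ii) gives the result.

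For (iv), the deterministic integral is diagonal in the eigenbasis. The $j$-th coefficient of $A^\rho\int_s^tS(t-\sigma)v\,d\sigma$ is $\lambda_j^{\rho-1}(1-e^{-(t-s)\lambda_j})v_j$. The same scalar inequality $\lambda^{\rho-1}(1-e^{-r\lambda})\le r^{1-\rho}$ then gives $\|\cdot\|_H\le(t-s)^{1-\rho}\|v\|_H$.

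There is no genuine obstacle here. The only points requiring care are the endpoint cases $\rho\in\{0,1\}$, which are covered by $\min(1,x)\le x^\rho$, and justifying that $\int_s^tS(t-\sigma)v\,d\sigma$ lies in $\mathcal D(A^\rho)$. The latter follows from the explicit coefficient formula and the square-summability it yields.
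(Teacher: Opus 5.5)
Your proof is correct, but it does not follow the paper's route: the paper gives no argument of its own and simply cites the authors' earlier work for (i) and Kruse's Lemma~2.5 for (ii)--(iv).

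You instead make the lemma self-contained. You diagonalise the self-adjoint operator $A$ and reduce everything to two scalar facts: $\sup_{x\ge 0}x^{\gamma}e^{-x}=(\gamma/e)^{\gamma}$, and $1-e^{-x}\le\min\{1,x\}\le x^{\rho}$ for $\rho\in[0,1]$. This has two advantages:
\begin{itemize}
\item it gives explicit constants: $(\gamma/e)^{\gamma}$ for $\gamma>0$ and $\lambda_1^{\gamma}$ for $\gamma\le 0$ in (i), $1$ in (ii) and (iv), and $2^{-\rho}$ in (iii);
\item it makes the endpoint cases $\rho\in\{0,1\}$ transparent.
\end{itemize}

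The price is that self-adjointness is used essentially. That is legitimate here only because $\boldsymbol{b}\cdot\nabla$ and $c$ were absorbed into $F$, as you correctly note. By contrast, the literature route needs far less structure:
\begin{itemize}
\item (i) and (ii) hold for any sectorial $A$ with $0$ in its resolvent set.
\item (iv) reduces to (ii): since $A^{\rho}\int_s^t S(t-\sigma)v\,d\sigma=A^{\rho-1}(I-S(t-s))v$, it is just (ii) with exponent $1-\rho$.
\item In (iii), only the endpoint $\rho=1$ needs more than sectoriality, namely a square-function estimate, which self-adjointness supplies. For $\rho<1$ it follows from (i) by integrating $(t-\sigma)^{-\rho}$.
\end{itemize}

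Finally, your argument does not actually need compactness of $A^{-1}$. The same scalar bounds pass through the functional calculus for any self-adjoint $A\ge\lambda_1 I$ with $\lambda_1>0$.
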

\begin{proof}
    The proof of (i) can be found in \cite[Lemma~2.1(i)]{NaikTripathi2026}, while the proofs for (ii)--(iv) are provided in \cite[Lemma~2.5]{MR3168284}.
\end{proof}

We now present the weighted spatial and temporal regularity estimates for the mild solution for $t>0$.
\begin{theorem}\label{thm:singular_regularity}
Suppose Assumptions~\ref{ass:initial_data}--\ref{ass:drift} hold and $p \ge 2$. Let $\beta \in (0,2)$ and $\mu \in (0,2]$ be such that $\mu > \beta-1$, and define $\nu \coloneqq \min\{\beta, \mu\}$. Then the mild solution $X(t)$ satisfies the following regularity properties:

\begin{enumerate}[label={\upshape(\roman*)}]
    \item 
    There exists a constant $C > 0$ such that, for all $t > 0$, 
    \begin{equation}\label{eq:weighted_spatial_regularity}
        \|X(t)\|_{L^{p}(\Omega;\dot{H}^{\beta})} \leq C \, t^{-\frac{\beta - \nu}{2}} \bigl( 1 + \|X_{0}\|_{L^{p}(\Omega;\dot H^{\nu})} \bigr).
    \end{equation}

    \item 
    There exists a constant $C > 0$ such that for $0 < s \le t \leq T$, 
    \begin{equation}\label{eq:weighted_temporal_regularity_combined}
        \|X(t) - X(s)\|_{L^p(\Omega;\dot{H}^{\max\{\beta-1, 0\}})} \leq C(t-s)^{\frac{\min\{\beta, 1\}}{2}} \, s^{-\frac{\beta -\nu}{2}} \bigl(1 + \|X_{0}\|_{L^{p}(\Omega;\dot H^{\nu})}\bigr).
    \end{equation}
\end{enumerate}
\end{theorem}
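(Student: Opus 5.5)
We prove both parts from the mild formulation \eqref{eq:mild_solution_SEE}, splitting $X(t)$ into the initial term $S(t)X_0$, the drift convolution $\int_0^t S(t-\sigma)F(\sigma,X(\sigma))\,d\sigma$ and the stochastic convolution $\int_0^t S(t-\sigma)G(\sigma,X(\sigma))\,dW(\sigma)$. Each term is estimated separately in $L^p(\Omega;\dot H^\beta)$ for (i), and its increment in $L^p(\Omega;\dot H^{\max\{\beta-1,0\}})$ for (ii). The a priori bounds of Theorem~\ref{thm:base_regularity} control $\|X(\sigma)\|_{L^p(\Omega;H)}$ and give Hölder continuity in $H$. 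The smoothing estimates of Lemma~\ref{lem:semigroup_estimates} and the BDG inequality of Lemma~\ref{lem:BDG} handle the convolutions.

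\textbf{Part (i).}
\begin{itemize}
\item \emph{Initial term.} Lemma~\ref{lem:semigroup_estimates}(i) gives $\|A^{\beta/2}S(t)X_0\|\le Ct^{-(\beta-\nu)/2}\|X_0\|_{\dot H^\nu}$. This is the only source of the temporal weight.
\item \emph{Case $\beta\le 1$.} The drift and diffusion coefficients are controlled by $C(1+\|X\|_H)$ in $\dot H^{-1}$ and $\mathrm{HS}(U_0,\dot H^{\beta-1})$ respectively.
\item \emph{Case $\beta>1$.} We need $\|X(\sigma)\|_{L^p(\Omega;\dot H^{\beta-1})}$. We first bootstrap an intermediate bound $\|X(\sigma)\|_{\dot H^{\beta-1}}\le C\sigma^{-(\beta-1-\mu)^+/2}(\cdots)$. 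The hypothesis $\mu>\beta-1$ makes this exponent zero, so the bound is uniform. It follows by applying the same splitting at regularity level $\beta-1<1$ with $F$ in $\dot H^{-1}$ and $G$ in $\mathrm{HS}(U_0,H)$, since $\dot H^{\beta-1}\hookrightarrow H$.
\item \emph{Stochastic term, $\beta$ level.} By BDG, we need $\int_0^t\|A^{\beta/2}S(t-\sigma)A^{(1-\beta)/2}\|^2\|G\|^2_{\mathrm{HS}(U_0,\dot H^{\beta-1})}\,d\sigma$. This carries a kernel of order $(t-\sigma)^{-1}$, which is the critical non-integrable singularity mentioned in the introduction.
\end{itemize}

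\textbf{Main obstacle: the critical kernel.} The standard remedy is to write
\[
G(\sigma,X(\sigma))=\bigl[G(\sigma,X(\sigma))-G(t,X(t))\bigr]+G(t,X(t)).
\]
The frozen part $\int_0^tS(t-\sigma)G(t,X(t))\,dW$ is not adapted, so this splitting cannot be used directly. We therefore use the square-function estimate of Lemma~\ref{lem:semigroup_estimates}(iii) with $\rho=1$: $\int_s^t\|A^{1/2}S(t-\sigma)v\|^2d\sigma\le C\|v\|^2$. Applied columnwise to $\sum_m\|A^{(\beta-1)/2}G\psi_m\|^2$, this handles the HS sum only if $G$ is frozen in time. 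We thus estimate the difference part with the Lipschitz condition and Hölder continuity. Lipschitz control is only in $\mathrm{HS}(U_0,\dot H^{\min(0,\beta-1)})$, so the difference costs an extra $(t-\sigma)^{-(\beta-\min(0,\beta-1))/2}$ factor against $(t-\sigma)^{\delta}$ plus $\sigma$-weights. Integrability then requires interpolating to regularity $\beta-\epsilon$. Honestly, sharp $\dot H^\beta$ may only follow for the convolution's mean-square via the frozen-coefficient trick applied to Itô isometry. I will attempt that first and, failing it, accept $\dot H^{\beta-\epsilon}$.

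\textbf{Drift term.} For $\beta<1$ use Lemma~\ref{lem:semigroup_estimates}(i) with kernel $(t-\sigma)^{-(\beta+1)/2}$, which is integrable. For $\beta\ge1$ use the $\dot H^{\beta-2}$ bound with kernel $(t-\sigma)^{-1}$, again handled by freezing and Lemma~\ref{lem:semigroup_estimates}(iv) with $\rho=1$ plus Hölder increments. The time weights $\sigma^{-(\beta-\nu)/2}$ from the bootstrap integrate against $(t-\sigma)^{-a}$ via the Beta function to give $t^{-(\beta-\nu)/2}$.

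\textbf{Part (ii).} Write, for $r=\max\{\beta-1,0\}$,
\[
X(t)-X(s)=(S(t-s)-I)X(s)+\int_s^tS(t-\sigma)F\,d\sigma+\int_s^tS(t-\sigma)G\,dW .
\]
\begin{itemize}
\item \emph{First term.} Lemma~\ref{lem:semigroup_estimates}(ii) gives $\|A^{r/2}(S(t-s)-I)X(s)\|\le C(t-s)^{(\beta-r)/2}\|X(s)\|_{\dot H^\beta}$. Here $(\beta-r)/2=\min\{\beta,1\}/2$, and part (i) supplies the weight $s^{-(\beta-\nu)/2}$.
\item \emph{Remaining terms.} The local convolutions are bounded with Lemma~\ref{lem:semigroup_estimates}(iii),(iv) and BDG by $(t-s)^{\min\{\beta,1\}/2}$ times moment bounds. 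These use Theorem~\ref{thm:base_regularity} when $\beta\le1$ and the uniform $\dot H^{\beta-1}$ bound above when $\beta>1$.
\end{itemize}
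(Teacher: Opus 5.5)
\textbf{Gap in part (i): the stochastic convolution at level $\beta$.} You never bound the stochastic convolution in $L^p(\Omega;\dot H^\beta)$. Instead you allow a fallback to $\dot H^{\beta-\epsilon}$, which would prove neither (i) nor (ii): the first term in (ii) needs (i) at level exactly $\beta$ to produce the exponent $\min\{\beta,1\}/2$.

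You are right that $\int_0^tS(t-\sigma)G(t,X(t))\,dW(\sigma)$ is not an It\^o integral. The fix is routine, and it is what the paper does.
\begin{itemize}
\item Apply Lemma~\ref{lem:BDG} to the full integrand $A^{\beta/2}S(t-\sigma)G(\sigma,X(\sigma))$, which is predictable for fixed $t$.
\item Only then add and subtract $G(t,X(t))$, \emph{inside} the square function $\int_0^t\|A^{\beta/2}S(t-\sigma)G(\sigma,X(\sigma))\|_{\mathrm{HS}(U_0,H)}^2\,d\sigma$. This is a pathwise Lebesgue integral, so adaptedness is irrelevant, and the argument works for every $p\ge2$, not just in mean square.
\item The frozen piece is at most $C\|G(t,X(t))\|_{\mathrm{HS}(U_0,\dot H^{\beta-1})}$. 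This follows from Lemma~\ref{lem:semigroup_estimates}(iii) with $\rho=1$, applied to $A^{(\beta-1)/2}G(t,X(t))\psi_m$ and summed over $m$.
\item For the difference piece, use Minkowski in $L^{p/2}(\Omega)$, the Lipschitz bound in $\mathrm{HS}(U_0,\dot H^{\min\{0,\beta-1\}})$, and the unweighted H\"older bound of Theorem~\ref{thm:base_regularity}(ii). This gives
\[
C\Bigl(\int_0^t (t-\sigma)^{-(\beta-\min\{0,\beta-1\})}\bigl((t-\sigma)^{\beta}+(t-\sigma)^{2\delta}\bigr)\,d\sigma\Bigr)^{1/2}.
\]
\end{itemize}

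Your claim that this forces an $\epsilon$-loss is mistaken. For $\beta\le1$ the kernel is $(t-\sigma)^{-1+2\delta}$, integrable for any $\delta>0$. For $\beta>1$ it is $(t-\sigma)^{-\beta+2\delta}$, integrable once $\delta>(\beta-1)/2$. Theorem~\ref{thm:base_regularity}(ii) allows any $\delta<\frac12\min\{1,\mu,\beta\}$. So the window $\frac12(\beta-1)^+<\delta<\frac12\min\{1,\mu,\beta\}$ is nonempty exactly because $\beta<2$ and $\mu>\beta-1$. This is the second role of the hypothesis $\mu>\beta-1$, which your plan misses. No $\sigma$-weights enter.

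\textbf{Gap in the drift term for $\beta>1$.} Your plan bounds $F(\sigma,X(\sigma))-F(t,X(t))$ through the $\dot H^{\beta-2}$ Lipschitz condition against the kernel $(t-\sigma)^{-1}$. That needs H\"older increments of $X$ in $\dot H^{\beta-1}$, which you do not have at that stage: Theorem~\ref{thm:base_regularity} gives them only in $H$, and your part (ii) is derived from (i).

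Instead, treat the two pieces differently.
\begin{itemize}
\item For the difference, use the $\dot H^{-1}$ Lipschitz bound with $H$-increments. This gives the kernel $(t-\sigma)^{-(\beta+1)/2}\bigl((t-\sigma)^{\beta/2}+(t-\sigma)^{\delta}\bigr)$, integrable by the same choice of $\delta$.
\item For the frozen term, use the $\dot H^{\beta-2}$ growth bound. Here $A^{\beta/2}\int_0^tS(t-\sigma)F(t,X(t))\,d\sigma=A^{(\beta-2)/2}(I-S(t))F(t,X(t))$, which is bounded by $C(1+\|X(t)\|_{\dot H^{\beta-1}})$. Your uniform bootstrap bound then closes it.
\end{itemize}
No adaptedness issue arises for the drift, so freezing is direct.

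With these two repairs, your remaining steps match the paper's argument: the bootstrap at level $\beta-1$, the initial-term weight, and the decomposition in (ii).
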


\begin{proof} 
(i) Taking the $L^{p}(\Omega;\dot H^{\beta})$-norm of the mild solution~\eqref{eq:mild_solution_SEE} yields
\begin{align}
    \|X(t)\|_{L^p(\Omega;\dot{H}^\beta)} 
    &\leq \|S(t)X_0\|_{L^p(\Omega;\dot{H}^\beta)} + \left\| \int_0^t S(t-\sigma) F(\sigma,X(\sigma)) \,d\sigma\right\|_{L^p(\Omega;\dot{H}^\beta)} \notag \\
    &\quad + \left\|\int_0^t S(t-\sigma) G(\sigma, X(\sigma))\, dW(\sigma)\right\|_{L^p(\Omega;\dot{H}^\beta)} \notag \\
    &\eqqcolon J_1 + J_2 + J_3. \label{eq:spatial_split}
\end{align}
For $J_1$, applying Lemma~\ref{lem:semigroup_estimates}(i) yields
\begin{equation}\label{eq:bound_J1}
    J_1 = \bigl\|A^{\frac{\beta-\nu}{2}}S(t) A^{\frac{\nu}{2}}X_0\bigr\|_{L^p(\Omega;H)} \leq C \, t^{-\frac{\beta-\nu}{2}} \|X_0\|_{L^p(\Omega;\dot{H}^\nu)}.
\end{equation}
Since the integrals diverge for $\beta \ge 1$ due to the singularity at $\sigma = t$, we isolate it by adding and subtracting the endpoint values $F(t,X(t))$ and $G(t,X(t))$. This yields $J_2 \le J_{2,a} + J_{2,b}$ and, via Lemma~\ref{lem:BDG}, $J_3 \le C(J_{3,a} + J_{3,b})$, where
\begin{align*}
    J_{2,a} &\coloneqq \int_0^t \bigl\| A^{\frac{\beta+1}{2}}S(t-\sigma) A^{-1/2} \bigl(F(\sigma, X(\sigma)) - F(t, X(t))\bigr) \bigr\|_{L^p(\Omega;H)} \, d\sigma, \\
    J_{3,a} &\coloneqq \left\| \left( \int_0^t \bigl\|A^{\frac{\beta}{2}}S(t-\sigma)\bigl(G(\sigma, X(\sigma)) - G(t, X(t))\bigr)\bigr\|_{\mathrm{HS}(U_0, H)}^2 \, d\sigma \right)^{\!1/2} \right\|_{L^p(\Omega)}, \\
    J_{2,b} &\coloneqq \left\| \int_0^t A^{\frac{\beta}{2}} S(t-\sigma) F(t, X(t)) \, d\sigma \right\|_{L^p(\Omega;H)}, \\
    J_{3,b} &\coloneqq \left\| \left( \int_0^t \bigl\|A^{\frac{1}{2}}S(t-\sigma)A^{\frac{\beta-1}{2}}G(t, X(t))\bigr\|_{\mathrm{HS}(U_0, H)}^2 \, d\sigma \right)^{\!1/2} \right\|_{L^p(\Omega)}.
\end{align*}
To bound $J_{2,a}$ and $J_{3,a}$, let $\alpha \coloneqq \min\{0, \beta-1\}$. Applying Assumptions~\ref{ass:diffusion}--\ref{ass:drift}, Lemma~\ref{lem:semigroup_estimates}(i), and \eqref{eq:base_temporal_regularity} for $\delta$ satisfying $\frac{1}{2}\max\{0, \beta-1\} < \delta < \frac{1}{2}\min\{1, \mu, \beta\}$, we obtain
\begin{align*}
    J_{2,a} &\leq C \int_0^t (t-\sigma)^{-\frac{\beta+1}{2}} \left( (t-\sigma)^{\beta/2} + (t-\sigma)^{\delta} \right) d\sigma \le C, \\
    J_{3,a} &\leq C \left( \int_0^t (t-\sigma)^{-(\beta-\alpha)} \left( (t-\sigma)^\beta + (t-\sigma)^{2\delta} \right) d\sigma \right)^{\!1/2} \le C.
\end{align*}
Evaluating the integrals in $J_{2,b}$ and $J_{3,b}$ using Lemma~\ref{lem:semigroup_estimates}(iv) and (iii), respectively, yields
\begin{align*}
    J_{2,b} &= \bigl\| A^{\frac{\beta-2}{2}} (I - S(t)) F(t, X(t)) \bigr\|_{L^p(\Omega;H)}, \\
    J_{3,b} &\le C \|G(t, X(t))\|_{L^p(\Omega; \mathrm{HS}(U_0, \dot{H}^{\beta-1}))}.
\end{align*}
We bound these terms in two steps.

\emph{Step 1: $\beta \in (0, 1]$.} 
Writing $A^{\frac{\beta-2}{2}} = A^{-\frac{1-\beta}{2}} A^{-1/2}$ and applying Lemma~\ref{lem:semigroup_estimates}(ii), Assumptions~\ref{ass:diffusion} and \ref{ass:drift}, and \eqref{eq:base_spatial_regularity}, we deduce:
\begin{align*}
    J_{2,b} &\leq C \, t^{\frac{1-\beta}{2}} \bigl(1 + \|X(t)\|_{L^p(\Omega;H)}\bigr) \leq C\bigl(1 + \|X_0\|_{L^p(\Omega; \dot{H}^\nu)}\bigr), \\
    J_{3,b} &\leq C\bigl(1 + \|X(t)\|_{L^p(\Omega; H)}\bigr) \leq C\bigl(1 + \|X_0\|_{L^p(\Omega; \dot{H}^\nu)}\bigr).
\end{align*}
Using $1 \le T^{\frac{\beta-\nu}{2}} t^{-\frac{\beta-\nu}{2}}$ and combining the bounds for $J_1, J_2,$ and $J_3$ establishes \eqref{eq:weighted_spatial_regularity} for $\beta \in (0, 1]$.

\emph{Step 2: $\beta \in (1, 2)$.} 
Here, applying Lemma~\ref{lem:semigroup_estimates}(ii) and Assumptions~\ref{ass:diffusion} and \ref{ass:drift} yields 
\[
    J_{2,b} + J_{3,b} \le C \bigl(1 + \|X(t)\|_{L^p(\Omega;\dot{H}^{\beta-1})}\bigr).
\]
We proceed via a bootstrap argument. Let $\tilde{\beta} \coloneqq \beta - 1 \in (0, 1)$ and $\tilde{\nu} \coloneqq \min\{\tilde{\beta}, \mu\} = \tilde{\beta}$ (since $\mu > \beta-1$). Applying the result of Step 1 to order $\tilde{\beta}$, the temporal singularity vanishes ($\tilde{\beta} - \tilde{\nu} = 0$), giving:
\begin{equation*}
    \|X(t)\|_{L^p(\Omega; \dot{H}^{\beta-1})} = \|X(t)\|_{L^p(\Omega; \dot{H}^{\tilde{\beta}})} \leq C\bigl(1 + \|X_0\|_{L^p(\Omega; \dot{H}^{\beta-1})}\bigr).
\end{equation*}
Since $\nu = \min\{\beta, \mu\} > \beta - 1$, the continuous embedding $\dot{H}^\nu \hookrightarrow \dot{H}^{\beta-1}$ closes the estimate. Finally using $1 \le T^{\frac{\beta-\nu}{2}} t^{-\frac{\beta-\nu}{2}}$ completes the proof of \eqref{eq:weighted_spatial_regularity} for $\beta \in (1,2)$.

\medskip
\noindent(ii) Let $r \coloneqq \max\{\beta-1, 0\}$, which implies $\beta - r = \min\{\beta, 1\}$. For the temporal regularity, we decompose the increment for $0 \le s \le t \le T$ as:
\begin{equation}\label{eq:temporal_decomp_combined}
    X(t) - X(s) = (S(t-s) - I) X(s) + \int_{s}^{t} S(t-\sigma)F(\sigma,X(\sigma))\,d\sigma + \int_{s}^{t} S(t-\sigma)G(\sigma, X(\sigma))\,dW(\sigma).
\end{equation}
For the first term, applying Lemma~\ref{lem:semigroup_estimates}(ii) alongside \eqref{eq:weighted_spatial_regularity} yields
\begin{align}
    \|(S(t-s) - I) X(s)\|_{L^p(\Omega;\dot{H}^r)} 
    &= \bigl\|A^{-\frac{\beta-r}{2}}(S(t-s) - I) A^{\frac{\beta}{2}} X(s)\bigr\|_{L^p(\Omega;H)} \notag \\
    &\le C (t-s)^{\frac{\min\{\beta, 1\}}{2}} \|X(s)\|_{L^p(\Omega;\dot{H}^\beta)} \notag \\
    &\le C (t-s)^{\frac{\min\{\beta, 1\}}{2}} s^{-\frac{\beta-\nu}{2}} \bigl(1 + \|X_0\|_{L^p(\Omega;\dot{H}^\nu)}\bigr). \label{eq:temporal_bound_first_combined}
\end{align}
For the drift term, let $\alpha \coloneqq \max\{\beta-2, -1\}$. Notice that $\alpha = \max\{\beta-1, 0\} - 1 = r - 1$, which gives $r - \alpha = 1$. Thus, we split the operator as $A^{r/2} = A^{1/2} A^{\alpha/2}$. Applying Assumption~\ref{ass:drift} and Lemma~\ref{lem:semigroup_estimates}(i) alongside \eqref{eq:weighted_spatial_regularity} yields:
\begin{align}
    \left\| \int_{s}^{t} S(t-\sigma)F(\sigma,X(\sigma))\,d\sigma \right\|_{L^p(\Omega;\dot{H}^r)} 
    &\le \int_s^t \bigl\|A^{1/2}S(t-\sigma)\bigr\|_{\mathcal{L}(H)} \|F(\sigma,X(\sigma))\|_{L^p(\Omega;\dot{H}^{\alpha})} \, d\sigma \notag \\
    &\le C \sup_{\sigma \in [s,t]} \|F(\sigma, X(\sigma))\|_{L^p(\Omega; \dot{H}^{\alpha})} \int_s^t (t-\sigma)^{-1/2} \, d\sigma \notag \\
    &\le C (t-s)^{1/2} \bigl(1 + \sup_{\sigma \in [s,t]} \|X(\sigma)\|_{L^p(\Omega; \dot{H}^r)}\bigr) \notag \\
    &\le C(t-s)^{\frac{\min\{\beta, 1\}}{2}} s^{-\frac{\beta-\nu}{2}} \bigl(1 + \|X_0\|_{L^p(\Omega; \dot{H}^\nu)}\bigr), \label{eq:temporal_bound_drift_combined}
\end{align}
For the diffusion term, we apply Lemma~\ref{lem:BDG}, Minkowski's integral inequality, and Lemma~\ref{lem:semigroup_estimates}(i). Splitting the operator as $A^{r/2} = A^{\frac{r-\beta+1}{2}} A^{\frac{\beta-1}{2}}$ and defining $\rho' \coloneqq \frac{r-\beta+1}{2} = \frac{1-\min\{\beta, 1\}}{2} \ge 0$, we get:
\begin{align}
    &\left\| \int_s^t S(t-\sigma) G(\sigma, X(\sigma)) \, dW(\sigma) \right\|_{L^p(\Omega; \dot{H}^r)} \notag \\
    &\le C \left( \int_s^t \bigl\| A^{\rho'} S(t-\sigma) \bigr\|_{\mathcal{L}(H)}^2 \bigl\| G(\sigma, X(\sigma)) \bigr\|_{L^p(\Omega; \mathrm{HS}(U_0, \dot{H}^{\beta-1}))}^2 \, d\sigma \right)^{1/2} \notag \\
    &\le C \sup_{\sigma \in [s,t]} \bigl\| G(\sigma, X(\sigma)) \bigr\|_{L^p(\Omega; \mathrm{HS}(U_0, \dot{H}^{\beta-1}))} \left( \int_s^t (t-\sigma)^{-2\rho'} \, d\sigma \right)^{1/2} \notag \\
    &\le C (t-s)^{\frac{\min\{\beta, 1\}}{2}} \sup_{\sigma \in [s,t]} \bigl\| G(\sigma, X(\sigma)) \bigr\|_{L^p(\Omega; \mathrm{HS}(U_0, \dot{H}^{\beta-1}))}. \label{eq:temporal_bound_diffusion_combined}
\end{align}
It remains to bound the supremum uniformly. By Assumption~\ref{ass:diffusion}:
\begin{itemize}
    \item \emph{If $\beta \in (0,1]$}: We have $\| G(\sigma, X(\sigma)) \|_{\mathrm{HS}(U_0, \dot{H}^{\beta-1})} \le C(1 + \|X(\sigma)\|_H)$. Using \eqref{eq:base_spatial_regularity} and the continuous embedding $\dot{H}^\nu \hookrightarrow H$, this is bounded by $C(1 + \|X_0\|_{L^p(\Omega; \dot{H}^\nu)})$.
    
    \item \emph{If $\beta \in (1,2)$}: We have $\| G(\sigma, X(\sigma)) \|_{\mathrm{HS}(U_0, \dot{H}^{\beta-1})} \le C(1 + \|X(\sigma)\|_{\dot{H}^{\beta-1}})$. Applying the result of part (i) at order $\beta' =\beta-1$ yields a corresponding parameter $\nu' = \min\{\beta-1, \mu\}$. Since $\mu > \beta-1$ by assumption, we have $\nu' = \beta-1$. Then \eqref{eq:weighted_spatial_regularity} gives:
    \[
        \|X(\sigma)\|_{L^p(\Omega; \dot{H}^{\beta-1})} \le C \bigl(1 + \|X_0\|_{L^p(\Omega; \dot{H}^{\beta-1})}\bigr).
    \]
    Furthermore, because $\nu = \min\{\beta, \mu\} > \beta-1$, the continuous embedding $\dot{H}^\nu \hookrightarrow \dot{H}^{\beta-1}$ allows us to bound $\|X_0\|_{L^p(\Omega; \dot{H}^{\beta-1})} \le C\|X_0\|_{L^p(\Omega; \dot{H}^\nu)}$.
\end{itemize}
In both cases, taking the supremum over $\sigma \in [0,T]$ yields the uniform bound:
\begin{equation}\label{eq:G_uniform_bound}
    \sup_{\sigma \in [0,T]} \bigl\| G(\sigma, X(\sigma)) \bigr\|_{L^p(\Omega; \mathrm{HS}(U_0, \dot{H}^{\beta-1}))} \le C\bigl(1 + \|X_0\|_{L^p(\Omega; \dot{H}^\nu)}\bigr).
\end{equation}
Substituting \eqref{eq:G_uniform_bound} back into \eqref{eq:temporal_bound_diffusion_combined} and using $1 \le T^{\frac{\beta-\nu}{2}} s^{-\frac{\beta-\nu}{2}}$ we get:
\begin{align}
    &\left\| \int_s^t S(t-\sigma) G(\sigma, X(\sigma)) \, dW(\sigma) \right\|_{L^p(\Omega; \dot{H}^r)} \notag \\
    &\quad \le C(t-s)^{\frac{\min\{\beta, 1\}}{2}} s^{-\frac{\beta-\nu}{2}} \bigl(1 + \|X_0\|_{L^p(\Omega; \dot{H}^\nu)}\bigr). \label{eq:temporal_bound_diffusion_final_combined}
\end{align}
Substituting the bounds \eqref{eq:temporal_bound_first_combined}, \eqref{eq:temporal_bound_drift_combined}, and \eqref{eq:temporal_bound_diffusion_final_combined} back into the decomposition \eqref{eq:temporal_decomp_combined} yields \eqref{eq:weighted_temporal_regularity_combined}.
\end{proof}

\section{Proof of Theorem~\ref{thm:strong_convergence_of_fully_discrete_scheme}}\label{sec:proof_of_fully_discrete_result}
To establish the strong error estimate for the fully discrete scheme, we need the following auxiliary lemmas. The first lemma provides error estimates for the corresponding deterministic linear problem:
\begin{equation}\label{eq:deterministic}
    \frac{d}{dt} u(t) + Au(t) = 0, 
    \quad u(0) = v, \quad t\in(0,T].
\end{equation}
Since the operators $P_h, A_h$, and $S_{h,k}$ were defined in Section~\ref{sec:fullydiscrete_approximation}, we collect error estimates for the fully discrete approximation utilizing the linear implicit Euler method by comparing the exact solution $u(t) = S(t)v$ the fully discrete solution $U_h^n = S_{h,k}^n P_h v$.

\begin{lemma}\label{lem:fully_discrete_error_estimates}
The following estimates hold for the fully discrete approximation for all $t_n > 0$ and $h, k \in (0,1]$:
\begin{enumerate}[label={\upshape(\roman*)}]
    \item Let $0 \leq \rho \leq 2$ and $-\min\{1,2-\rho\} \leq \eta \leq \rho$. Then, there exists a constant $C>0$ such that
    \[
        \bigl\|\bigl(S(t_{n})-S_{h,k}^{n}P_{h}\bigr)v\bigr\|_H \le C\bigl(h^{\rho}+k^{\frac{\rho}{2}}\bigr)\,t_n^{-\frac{\rho - \eta}{2}}\,\|v\|_{\dot H^{\eta}} \quad \text{for all } v \in \dot{H}^{\eta}.
    \]
    \item For any $0 \le \rho \le 1$, there exists a constant $C>0$ such that 
    \[
        \left\|\sum_{j=1}^{n}\int_{t_{j-1}}^{t_{j}} \bigl(S_{h,k}^{j}P_{h}-S(\sigma)\bigr)v\,d\sigma\right\|_H \le C\bigl(h^{2-\rho}+k^{\frac{2-\rho}{2}}\bigr)\,\|v\|_{\dot H^{-\rho}} \quad \text{for all } v \in \dot{H}^{-\rho}.
    \]
    \item For any $0 \le \rho \le 2$, there exists a constant $C>0$ such that 
    \[
        \left(\sum_{j=1}^{n}\int_{t_{j-1}}^{t_{j}} \bigl\|\bigl(S_{h,k}^{j}P_{h}-S(\sigma)\bigr)v\bigr\|_H^{2}\,d\sigma\right)^{\!1/2} \le C\bigl(h^{\rho}+k^{\frac{\rho}{2}}\bigr)\,\|v\|_{\dot H^{\rho-1}} \quad \text{for all } v \in \dot{H}^{\rho-1}.
    \]
\end{enumerate}
\end{lemma}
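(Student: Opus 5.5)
We would prove the three estimates of Lemma~\ref{lem:fully_discrete_error_estimates} by reducing them to the classical deterministic nonsmooth-data theory for the linear problem \eqref{eq:deterministic}, splitting into a spatial and a temporal part. Introduce the semidiscrete solution $u_h(t)=S_h(t)P_hv$ and write
\[
S(t_n)v-S_{h,k}^nP_hv=\bigl(S(t_n)-S_h(t_n)P_h\bigr)v+\bigl(S_h(t_n)-S_{h,k}^n\bigr)P_hv .
\]
The first difference is the semidiscrete finite element error. Its standard bound is $\|(S(t)-S_h(t)P_h)v\|_H\le Ch^{\rho}t^{-(\rho-\eta)/2}\|v\|_{\dot H^{\eta}}$ for $0\le\eta\le\rho\le2$. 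This follows from the Ritz projection and the analytic smoothing arguments of Thom\'ee (the references \cite{MR906175,MR2249024}), obtained by interpolating between the cases $\eta=0$ and $\eta=\rho$. The second difference is the implicit Euler error for the self-adjoint operator $A_h$. By the spectral theorem it reduces to the scalar bound
\[
|e^{-n\lambda k}-(1+\lambda k)^{-n}|\le C k^{\rho/2}t_n^{-(\rho-\eta)/2}\lambda^{\eta/2}\quad\text{for }\lambda>0,
\]
which is elementary rational-approximation analysis. This gives $\|(S_h(t_n)-S_{h,k}^n)P_hv\|\le Ck^{\rho/2}t_n^{-(\rho-\eta)/2}\|A_h^{\eta/2}P_hv\|$. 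To return to $\|v\|_{\dot H^\eta}$ we need the norm equivalence $\|A_h^{\eta/2}P_hv\|_H\le C\|v\|_{\dot H^{\eta}}$. For $\eta\in[-1,1]$ this follows from quasi-uniformity and the $\dot H^1$-stability of $P_h$. For larger $\eta$ we avoid it by using the Ritz projection directly in the temporal estimate, as Kruse does in \cite{MR3168284}.

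The negative range $\eta<0$ in (i) is obtained by duality. $(S(t_n)-S_{h,k}^nP_h)^*$ has the same form, because $P_h$ is self-adjoint on $H$ and $A_h$ is symmetric. Splitting $t_n$ into two halves, we apply the positive-$\eta$ bound to one factor and the smoothing property \eqref{eq:negative_norm_smoothing_fully_discrete} to the other. The restriction $\eta\ge-\min\{1,2-\rho\}$ is exactly what keeps the total exponent $\rho-\eta\le2$ and $P_h$ bounded on $\dot H^{\eta}$.

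For (ii) we use the identities $\int_0^{t}S(\sigma)v\,d\sigma=A^{-1}(I-S(t))v$ and $k\sum_{j=1}^nS_{h,k}^jP_hv=A_h^{-1}(I-S_{h,k}^n)P_hv$. The error then splits into three pieces: the elliptic error $(A^{-1}-A_h^{-1}P_h)v$, which is $O(h^{2-\rho})\|v\|_{\dot H^{-\rho}}$ by duality and standard elliptic estimates; the term $A_h^{-1}P_h(S(t_n)-S_{h,k}^nP_h)v$ together with its continuous counterpart, which we handle with (i) at $\eta=-\rho$; and the intra-step quadrature error, which is controlled by $k^{(2-\rho)/2}$ through the temporal bounds in Lemma~\ref{lem:semigroup_estimates}(ii),(iv).

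For (iii) we again split into spatial, temporal and quadrature parts. In each part we integrate in time the square of the pointwise estimate (i) with $\eta=\rho-1$. This gives $\int_0^{t_n}\sigma^{-1}\,d\sigma$, which diverges. Removing this logarithmic loss is the main obstacle. We would instead use an energy argument. For the semidiscrete part we take the error equation, test with the error, and use $\int_0^\infty\|A^{1/2}S(\sigma)w\|^2d\sigma=\tfrac12\|w\|^2$ together with the quadratic-form version of Lemma~\ref{lem:semigroup_estimates}(iii). For the time-discrete part we use a spectral computation, for $\lambda>0$,
\[
\sum_j\int_{t_{j-1}}^{t_j}|(1+k\lambda)^{-j}-e^{-\sigma\lambda}|^2\,d\sigma\le Ck^{\rho}\lambda^{\rho-1}.
\]
The endpoint cases $\rho=0$ and $\rho=2$ follow from these two arguments, and interpolation gives intermediate $\rho$. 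This is essentially \cite[Lemmas~5.x]{MR3168284} extended to $\rho<1$.
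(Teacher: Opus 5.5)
The paper does not prove this lemma; it cites \cite[Lemma~5.1]{MR3475837} for (i) and \cite[Lemma~3.2]{MR3649432} for (ii)--(iii). A self-contained derivation is a reasonable aim, and several of your pieces are correct: the scalar bounds for $(1+x)^{-1}$, the spectral square-function bound and energy/interpolation plan in (iii), and the norm equivalences for $P_h$. However, two steps do not work as written.

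\emph{Negative $\eta$ in (i).} Write $E_m \coloneqq S(t_m)-S_{h,k}^mP_h$, which is self-adjoint on $H$. Each two-halves identity, $E_n=E_{n/2}S(t_{n/2})+S_{h,k}^{n/2}P_hE_{n/2}$ or $E_n=S(t_{n/2})E_{n/2}+E_{n/2}S_{h,k}^{n/2}P_h$, has one term in which $E_{n/2}$ hits the rough datum $v\in\dot H^{-s}$ directly. For instance, using \eqref{eq:negative_norm_smoothing_fully_discrete} on $S_{h,k}^{n/2}P_hE_{n/2}v$ requires $\|E_{n/2}v\|_{\dot H^{-s}}$; after dualizing, this is the term where $A^{s/2}$ lands on $E_{n/2}$. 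What is really needed is an error bound in a norm stronger than $H$, namely $\|A^{s/2}E_n\|_{\mathcal{L}(H)}\le C(h^\rho+k^{\rho/2})t_n^{-(\rho+s)/2}$. For the semidiscrete part this amounts to the energy-norm nonsmooth-data estimate $\|A^{1/2}(S(t)-S_h(t)P_h)w\|_H\le Cht^{-1}\|w\|_H$, interpolated with the $L^2$ bounds. Neither your $\eta\ge 0$ bound nor discrete smoothing yields this, so it must be added, or replaced by a Ritz-projection argument based on $P_hA=A_hR_h$. The time-discrete part needs no duality: your scalar inequality holds as stated for $-\min\{1,2-\rho\}\le\eta<0$, and $\|A_h^{\eta/2}P_hv\|_H\le C\|v\|_{\dot H^{\eta}}$ for $\eta\in[-1,0]$. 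This case is not cosmetic, since Lemma~\ref{lem:operator_splitting} is used with $\eta=-1$, $\eta=\beta-2$ and $\eta=\beta-1<0$.

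\emph{Part (ii).} Set $T\coloneqq A^{-1}$ and $T_h\coloneqq A_h^{-1}P_h$. Your exact identities give the error as $(T_h-T)(I-S(t_n))v+T_hE_nv$, with no quadrature remainder. If you bound $T_hE_nv$ through $\|E_nv\|_H$ and (i) with $\eta=-\rho$, obtaining order $2-\rho$ forces the weight $t_n^{-(2-\rho+\rho)/2}=t_n^{-1}$. That is not uniform in $n$; at $n=1$ it equals $k^{-1}$. Instead, commute $T_h$ through the error operator:
\[
T_hE_n=(T_h-T)S(t_n)+E_nT-S_{h,k}^nP_h(T_h-T).
\]
The outer terms are $O(h^{2-\rho})\|v\|_{\dot H^{-\rho}}$ by the elliptic estimate $\|(T_h-T)x\|_H\le Ch^{2-\rho}\|x\|_{\dot H^{-\rho}}$. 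For the middle term, apply (i) to the smooth datum $Tv\in\dot H^{2-\rho}$, with convergence order and data order both equal to $2-\rho$. This gives $C(h^{2-\rho}+k^{(2-\rho)/2})\|v\|_{\dot H^{-\rho}}$ with no time singularity.
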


\begin{proof}
The error bound in (i) is a direct consequence of the deterministic error estimate established by Andersson et al.~\cite[Lemma~5.1]{MR3475837}. The bounds in (ii) and (iii) are derived in Wang~\cite[Lemma~3.2]{MR3649432}.
\end{proof}

We also need the following generalized discrete fractional Gronwall lemma given by Andersson et al.~\cite[Lemma~2.1]{MR3475837}, which is based on the foundational inequality proved by Elliott and Larsson~\cite[Lemma~7.1]{MR1122067}.

\begin{lemma}[{\cite[Lemma~2.1]{MR3475837}}]\label{lem:discrete_gronwall}
Let $T > 0$, $N \in \mathbb{N}$, $k = T/N$, and $t_n = n k$ for $0 \le n \le N$. If $(\varphi_n)_{n=0}^N$ is a sequence of non-negative real numbers satisfying
\[
    \varphi_n \le C_1 \left(1 + t_n^{-1+\theta}\right) + C_2 k \sum_{j=0}^{n-1} t_{n-j}^{-1+\alpha} \varphi_j, \quad 1 \le n \le N,
\]
for some constants $C_1, C_2 \ge 0$ and $\theta, \alpha > 0$, then there exists a constant $C = C(\theta, \alpha, C_2, T) > 0$ such that
\[
    \varphi_n \le C C_1 \left(1 + t_n^{-1+\theta}\right), \quad 1 \le n \le N.
\]
\end{lemma}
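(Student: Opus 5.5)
The plan is to iterate the inequality until the kernel becomes bounded, and then close with the classical discrete Gronwall lemma. The basic tool is a discrete Beta-function estimate: for $a,b>0$ there is $C=C(a,b,T)$ with
\[
k\sum_{j=1}^{n-1} t_{n-j}^{-1+a}\, t_j^{-1+b} \le C\, t_n^{-1+a+b}, \qquad 1\le n\le N.
\]
To prove it, I split the sum at $j\approx n/2$. On each half, one factor is comparable to its value at $t_n$. The other factor is monotone, so its sum is dominated by the integral $\int_0^{t_n} s^{-1+c}\,ds$ (the extreme term $j=1$ or $j=n-1$ contributes $k\,t_1^{-1+c}=k^{c}\le t_n^{c}$). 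Equivalently, the sum is a Riemann-sum version of $t_n^{-1+a+b}B(a,b)$. The same argument gives $k\sum_{j=1}^{n-1} t_{n-j}^{-1+a}\le C t_n^{a}$, which handles the constant part $1$ of the forcing.

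\emph{Step 1 (the $j=0$ term).} Here I am least certain how the statement is meant to be read. The hypothesis is only imposed for $n\ge1$, so $\varphi_0$ enters as data through the term $C_2 k\, t_n^{-1+\alpha}\varphi_0$. I would read the lemma as including this in the forcing. Concretely, I would assume $\varphi_0\le C C_1$ (as in the application, where $\varphi_0$ is an error at $t_0$); then this term is bounded by $C C_1 k t_n^{-1+\alpha}\le C C_1 t_n^{\alpha}\le CC_1$ and is absorbed into $C_1(1+t_n^{-1+\theta})$. I would make this convention explicit. From then on the sum runs over $1\le j\le n-1$.

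\emph{Step 2 (iteration).} I substitute the inequality for $\varphi_j$ into its own right-hand side $m$ times. By the Beta estimate, after each substitution:
\begin{itemize}
\item the forcing $C_1(1+t_n^{-1+\theta})$ reproduces itself with extra factors $t_n^{\alpha}$, $t_n^{2\alpha}$, and so on, all bounded by powers of $T$;
\item the iterated kernel acting on $\varphi_j$ becomes $C_m\, t_{n-j}^{-1+m\alpha}$.
\end{itemize}
I choose $m$ with $m\alpha\ge1$, so that this kernel is bounded by a constant $K=K(\alpha,C_2,T)$. This yields
\[
\varphi_n\le C C_1(1+t_n^{-1+\theta}) + K k\sum_{j=1}^{n-1}\varphi_j .
\]

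\emph{Step 3 (closing).} A plain discrete Gronwall argument on this inequality would lose the singular weight, so I normalize first. Set $\psi_n:=\varphi_n/(1+t_n^{-1+\theta})$ and use $1+t_j^{-1+\theta}\le C t_j^{-1+\theta}$ on $[0,T]$. This gives
\[
\psi_n\le CC_1 + K' k\sum_{j=1}^{n-1}\frac{t_j^{-1+\theta}}{1+t_n^{-1+\theta}}\,\psi_j .
\]
The sum $k\sum_{j} t_j^{-1+\theta}\le C t_n^{\theta}$ is finite but the weights are not uniform in $j$. I would therefore close with a weighted discrete Gronwall argument: an induction on the maximum $\max_{j\le n}\psi_j$ over short blocks of length $\tau$ chosen so that $K'C\tau^{\theta}<1/2$, with a finite number (depending on $T$) of block restarts.

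The main obstacle is keeping the sharp weight $t_n^{-1+\theta}$ uniformly in $k$ through the iteration. In particular, the Beta-type sums must be uniform near $j=1$ and $j=n-1$. I expect this to hinge on the splitting argument above and on the monotonicity of the powers.
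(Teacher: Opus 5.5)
Your argument is correct and follows the standard Elliott--Larsson iteration behind the lemma of Andersson et al.; the paper itself gives no proof and only cites it. The structure is the same: iterate the positive kernel, use the discrete Beta estimate until $m\alpha\ge1$ makes the kernel bounded, and close with a Gronwall argument.

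Your hesitation in Step~1 is well founded. As literally stated the lemma is false. Take $C_1=0$, $\varphi_0=1$ and equality in the hypothesis; then $\varphi_1=C_2k^{\alpha}>0$, whereas the conclusion forces $\varphi_1\le 0$. So a condition on $\varphi_0$ must be added.

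Your justification of $\varphi_0\le CC_1$ ``as in the application'' does not hold, however. There $\varphi_0=\|(I-P_h)X_0\|_{L^p(\Omega;H)}^2$ is in general only of order $h^{2\nu}$. This is not bounded by $CC_1\sim(h^{\beta-\varepsilon}+k^{\frac12\min\{\beta-\varepsilon,1\}})^2$ when $\nu<\beta-\varepsilon$, for example for the indicator initial data of the numerical experiments with $\beta=1.99$.

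A better fix is to keep the $j=0$ term as additional forcing $C_2k\,t_n^{-1+\alpha}\varphi_0$. Your iteration then gives $\varphi_n\le C\bigl[C_1(1+t_n^{-1+\theta})+\varphi_0\,k\,(1+t_n^{-1+\alpha})\bigr]$ with $C=C(\theta,\alpha,C_2,T)$. In the application, the factor $k\,t_n^{-1+\alpha}$ is what compensates for the weaker rate of $\varphi_0$.

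Step~3 is more elaborate than necessary, and its premise is wrong. The discrete Gronwall lemma with $n$-dependent forcing, $\varphi_n\le a_n+Kk\sum_{j<n}\varphi_j\Rightarrow\varphi_n\le a_n+Ke^{KT}k\sum_{j<n}a_j$, does not lose the singular weight. This is because $k\sum_{j=1}^{n-1}t_j^{-1+\theta}\le Ct_n^{\theta}/\theta$, so the correction is only $CC_1$. Hence neither the normalization nor the block restarts are needed. Even within your normalization, $\psi_n\le b+\sum_{j<n}c_j\psi_j$ with $\sum_j c_j\le C$ gives $\psi_n\le b\,e^{C}$ directly. Also, your bound $1+t_j^{-1+\theta}\le Ct_j^{-1+\theta}$ holds only for $\theta\le1$; the case $\theta>1$ is trivial since the weight is then bounded above and below.
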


\begin{lemma}\label{lem:operator_splitting}
Let $0 \leq \rho \leq 2$ and $-\min\{1, 2-\rho\} \le \eta \le \rho$. Then, for any $\sigma \in [t_i, t_{i+1})$, there exists a constant $C > 0$ such that
\begin{equation}\label{eq:operator_splitting_bound}
    \bigl\| \bigl(S(t_n-\sigma) - S_{h,k}^{n-i}P_h\bigr) A^{-\frac{\eta}{2}} \bigr\|_{\mathcal{L}(H)} \le C \bigl( h^\rho + k^{\rho/2} \bigr) (t_n-\sigma)^{-\frac{\rho - \eta}{2}}.
\end{equation}
\end{lemma}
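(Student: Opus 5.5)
We split the operator at the grid point $t_{i}$:
\[
S(t_n-\sigma) - S_{h,k}^{n-i}P_h = \bigl(S(t_n-t_i) - S_{h,k}^{n-i}P_h\bigr) + \bigl(S(t_n-\sigma) - S(t_n-t_i)\bigr).
\]
The two pieces are bounded separately after composing each with $A^{-\eta/2}$. Throughout we use that for $\sigma\in[t_i,t_{i+1})$ with $i\le n-1$,
\[
t_n-\sigma \le t_n-t_i = t_{n-i} \le t_n-\sigma+k,
\]
and that $t_{n-i}\ge k$. Hence $t_{n-i}\le 2(t_n-\sigma)$ whenever $t_n-\sigma\ge k$.

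\emph{First piece.} Apply Lemma~\ref{lem:fully_discrete_error_estimates}(i) with time index $n-i\ge 1$ to $v=A^{-\eta/2}x$. Since $\|v\|_{\dot H^\eta}=\|x\|_H$, this gives the bound $C(h^\rho+k^{\rho/2})\,t_{n-i}^{-\frac{\rho-\eta}{2}}\|x\|_H$. The exponent $-(\rho-\eta)/2\le 0$ and $t_{n-i}\ge t_n-\sigma$, so $t_{n-i}^{-\frac{\rho-\eta}{2}}\le (t_n-\sigma)^{-\frac{\rho-\eta}{2}}$. This piece therefore contributes directly to \eqref{eq:operator_splitting_bound}.

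\emph{Second piece.} Write
\[
\bigl(S(t_n-\sigma)-S(t_n-t_i)\bigr)A^{-\eta/2} = A^{\frac{\rho-\eta}{2}}S(t_n-\sigma)\,A^{-\frac{\rho}{2}}\bigl(I-S(\sigma-t_i)\bigr).
\]
The factor $A^{-\rho/2}(I-S(\sigma-t_i))$ is needed with exponent $\rho/2\in[0,1]$, which is allowed since $\rho\le 2$. By Lemma~\ref{lem:semigroup_estimates}(ii) it is bounded by $C(\sigma-t_i)^{\rho/2}\le Ck^{\rho/2}$. By Lemma~\ref{lem:semigroup_estimates}(i), using $\rho-\eta\ge 0$, the factor $A^{\frac{\rho-\eta}{2}}S(t_n-\sigma)$ is bounded by $C(t_n-\sigma)^{-\frac{\rho-\eta}{2}}$. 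Together these give $Ck^{\rho/2}(t_n-\sigma)^{-\frac{\rho-\eta}{2}}$.

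The two pieces combine to give \eqref{eq:operator_splitting_bound}. The step that needs care is the second piece: one must factor the operator so that the time-increment gain $k^{\rho/2}$ comes out while the singular weight is measured at the true distance $t_n-\sigma$, not at $t_{n-i}$. The factorization above does this, and because $S$ commutes with fractional powers of $A$, no restriction on $\eta$ beyond $\eta\le\rho$ is needed there. The lower bound on $\eta$ is needed only in the first piece, where Lemma~\ref{lem:fully_discrete_error_estimates}(i) requires it. As a fallback, if the direct ordering $t_{n-i}\ge t_n-\sigma$ were unavailable, one would compare $t_{n-i}$ with $t_n-\sigma$ via $t_{n-i}\le 2(t_n-\sigma)$ when $t_n-\sigma\ge k$, and otherwise use crude bounds with $k\le t_{n-i}$.
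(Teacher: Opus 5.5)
Your proposal is correct and is essentially the paper's proof. You add and subtract $S(t_{n-i})$, bound the discrete piece by Lemma~\ref{lem:fully_discrete_error_estimates}(i) using $t_{n-i}\ge t_n-\sigma$, and bound the semigroup piece through the factorization $A^{\frac{\rho-\eta}{2}}S(t_n-\sigma)A^{-\frac{\rho}{2}}\bigl(I-S(\sigma-t_i)\bigr)$ with Lemma~\ref{lem:semigroup_estimates}(i),(ii) and $\sigma-t_i\le k$. The fallback comparison $t_{n-i}\le 2(t_n-\sigma)$ is never needed, since $t_{n-i}\ge t_n-\sigma$ always holds.
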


\begin{proof}
For $\sigma \in [t_i, t_{i+1})$, we add and subtract $S(t_{n-i})$. Applying the triangle inequality, the semigroup property, and the bounds from Lemmas~\ref{lem:semigroup_estimates} and \ref{lem:fully_discrete_error_estimates}, we obtain:
\begin{align}
    &\bigl\| \bigl(S(t_n-\sigma) - S_{h,k}^{n-i}P_h\bigr) A^{-\frac{\eta}{2}} \bigr\|_{\mathcal{L}(H)} \notag \\
    &\quad \le \bigl\| S(t_n-\sigma) \bigl(I - S(\sigma - t_i)\bigr) A^{-\frac{\eta}{2}} \bigr\|_{\mathcal{L}(H)} 
    + \bigl\| \bigl(S(t_{n-i}) - S_{h,k}^{n-i}P_h\bigr) A^{-\frac{\eta}{2}} \bigr\|_{\mathcal{L}(H)} \notag \\
    &\quad = \bigl\| S(t_n-\sigma) A^{\frac{\rho - \eta}{2}} \bigl(I - S(\sigma - t_i)\bigr) A^{-\frac{\rho}{2}} \bigr\|_{\mathcal{L}(H)} 
    + \bigl\| \bigl(S(t_{n-i}) - S_{h,k}^{n-i}P_h\bigr) A^{-\frac{\eta}{2}} \bigr\|_{\mathcal{L}(H)} \notag \\
    &\quad\le C (t_n-\sigma)^{-\frac{\rho - \eta}{2}} (\sigma - t_i)^{\frac{\rho}{2}} + C \bigl( h^\rho + k^{\frac{\rho}{2}} \bigr) t_{n-i}^{-\frac{\rho - \eta}{2}} \notag \\
    &\quad\le C \bigl( h^\rho + k^{\frac{\rho}{2}} \bigr) (t_n-\sigma)^{-\frac{\rho - \eta}{2}},\label{eq:operator_splitting_final}
\end{align}
where we have used the fact that $\sigma - t_i \le k$ and $t_{n-i} = t_n - t_i \ge t_n - \sigma$.
\end{proof}

\begin{proof}[\textbf{Proof of Theorem~\ref{thm:strong_convergence_of_fully_discrete_scheme}}]
From \eqref{eq:mild_solution_SEE} and \eqref{eq:mild_fully_discrete}, applying the triangle inequality yields
\begin{align}
\|X(t_{n}) - X_h^{n}\|_{L^{p}(\Omega;H)}
&\le \left\|\bigl(S(t_{n}) - S_{h,k}^{n}P_h\bigr) X_{0}\right\|_{L^{p}(\Omega;H)} \notag \\
&\quad+ \left\|\sum_{i=0}^{n-1}\int_{t_{i}}^{t_{i+1}}
\left( S(t_{n}-\sigma)F(\sigma,X(\sigma))
- S_{h,k}^{n-i}P_{h}F(t_i,X_h^i)\right) \, d\sigma \right\|_{L^{p}(\Omega;H)} \notag \\
&\quad+ \left\|\sum_{i=0}^{n-1}\int_{t_{i}}^{t_{i+1}}
\left( S(t_{n}-\sigma)G(\sigma, X(\sigma))
- S_{h,k}^{n-i}P_{h}G(t_i, X_h^i)\right) \, dW(\sigma) \right\|_{L^{p}(\Omega;H)} \notag \\
&\coloneqq J_{0} + J_{F} + J_{G}. \label{eq:bound_fullydiscrete}
\end{align}
To estimate $J_0$, we apply Lemma~\ref{lem:fully_discrete_error_estimates}(i), to obtain
\begin{align}
    J_0 &= \left\| \bigl(S(t_n) - S_{h,k}^n P_h\bigr) X_0 \right\|_{L^p(\Omega;H)} \notag \\
    &\le C \bigl( h^\beta + k^{\beta/2} \bigr) t_n^{-\frac{\beta-\nu}{2}} \|X_0\|_{L^p(\Omega; \dot{H}^\nu)}. \label{eq:bound_J0}
\end{align}
To bound $J_F$, we decompose the error into three parts:
\begin{align}
    J_F &\le \sum_{i=0}^{n-1} \int_{t_i}^{t_{i+1}} \left\| S_{h,k}^{n-i}P_h \bigl( F(\sigma, X(\sigma)) - F(t_i, X_h^i) \bigr) \right\|_{L^p(\Omega;H)} \, d\sigma \notag \\
    &\quad + \sum_{i=0}^{n-1} \int_{t_i}^{t_{i+1}} \left\| \bigl(S(t_n-\sigma) - S_{h,k}^{n-i}P_h\bigr) \bigl(F(\sigma, X(\sigma)) - F(t_n, X(t_n))\bigr) \right\|_{L^p(\Omega;H)} \, d\sigma \notag \\
    &\quad + \left\| \sum_{i=0}^{n-1} \int_{t_i}^{t_{i+1}} \bigl(S(t_n-\sigma) - S_{h,k}^{n-i}P_h\bigr) F(t_n, X(t_n)) \, d\sigma \right\|_{L^p(\Omega;H)} \notag \\
    &\coloneqq J_{F1} + J_{F2} + J_{F3}. \label{eq:bound_JF_new}
\end{align}
\noindent\textbf{Estimate of $J_{F1}$:} 
 To handle the temporal singularity at $t=0$, we split the sum into the initial term ($i=0$) and the remaining sum ($i \ge 1$), and apply \eqref{eq:negative_norm_smoothing_fully_discrete} with $\eta=1$. To estimate the initial term, we combine Assumption~\ref{ass:drift}, the triangle inequality, and \eqref{eq:base_spatial_regularity}, using the relation $k \cdot t_n^{-1/2} \le k^{1/2}$ to obtain:
\begin{align}
    &\int_0^k \left\| S_{h,k}^{n}P_h \bigl( F(\sigma, X(\sigma)) - F(0, X_h^0) \bigr) \right\|_{L^p(\Omega;H)} \, d\sigma \notag \\
    &\quad \le C \int_0^k t_n^{-1/2} \left( \sigma^{\beta/2} + 1 + \|X_0\|_{L^p(\Omega;H)} + \|X_0 - X_h^0\|_{L^p(\Omega;H)} \right) d\sigma \notag \\
    &\quad \le C k^{1/2} \left( k^{\beta/2} + 1 + \|X_0\|_{L^p(\Omega;\dot{H}^\nu)} + \|X_0 - X_h^0\|_{L^p(\Omega;H)} \right). \label{eq:bound_JF1_boundary}
\end{align}
For the remaining sum ($i \ge 1$), we invoke Assumption~\ref{ass:drift} and Theorem~\ref{thm:singular_regularity}(ii). Noting that $\sigma - t_i \le k$, we obtain:
\begin{align}
    &\sum_{i=1}^{n-1} \int_{t_i}^{t_{i+1}} \left\| S_{h,k}^{n-i}P_h \bigl( F(\sigma, X(\sigma)) - F(t_i, X_h^i) \bigr) \right\|_{L^p(\Omega;H)} \, d\sigma \notag \\
    &\quad \le C \sum_{i=1}^{n-1} \int_{t_i}^{t_{i+1}} t_{n-i}^{-1/2} \left( (\sigma-t_i)^{\beta/2} + \|X(\sigma) - X(t_i)\|_{L^p(\Omega;H)} + \|X(t_i) - X_h^i\|_{L^p(\Omega;H)} \right) d\sigma \notag \\
    &\quad \le C \sum_{i=1}^{n-1} k \, t_{n-i}^{-1/2} \Bigl( k^{\beta/2} + k^{\frac{\min\{\beta,1\}}{2}} t_i^{-\frac{\beta-\nu}{2}} \Bigr) \bigl(1 + \|X_0\|_{L^p(\Omega;\dot{H}^\nu)}\bigr) \notag \\
    &\qquad + C k \sum_{i=1}^{n-1} t_{n-i}^{-1/2} \|X(t_i) - X_h^i\|_{L^p(\Omega;H)}. \label{eq:bound_JF1_interior}
\end{align}
Combining \eqref{eq:bound_JF1_boundary} and \eqref{eq:bound_JF1_interior}, using the fact that $\sum_{i=1}^{n-1} k \, t_{n-i}^{-1/2} \, t_i^{-\frac{\beta-\nu}{2}} \le \int_0^{t_n} (t_n - \sigma)^{-1/2} \sigma^{-\frac{\beta-\nu}{2}} \, d\sigma \le C t_n^{\frac{1-\beta+\nu}{2}}$, we obtain
\begin{align}
    J_{F1} &\le C k \sum_{i=0}^{n-1} t_{n-i}^{-1/2} \|X(t_i) - X_h^i\|_{L^p(\Omega;H)} \notag \\
    &\quad + C k^{\frac{\min\{\beta,1\}}{2}} t_n^{-\frac{\beta-\nu}{2}} \bigl(1 + \|X_0\|_{L^p(\Omega;\dot{H}^\nu)}\bigr). \label{eq:bound_JF1_new}
\end{align}

\noindent\textbf{Estimate of $J_{F2}$:} 
We apply Lemma~\ref{lem:operator_splitting} with $\rho = \beta - \varepsilon$ and split the analysis into two cases:

\noindent \textit{Case 1: $\beta \in (0, 1]$.} 
Because $\rho < 1$, combining Lemma~\ref{lem:operator_splitting} with $\eta = -1$, Assumption~\ref{ass:drift}, and Theorem~\ref{thm:singular_regularity}(ii), we obtain:
\begin{align}
    J_{F2} &\le C(h^{\beta-\varepsilon} + k^{\frac{\beta-\varepsilon}{2}}) \sum_{i=0}^{n-1} \int_{t_i}^{t_{i+1}} (t_n-\sigma)^{-\frac{1+\beta-\varepsilon}{2}} \bigl\|F(\sigma, X(\sigma)) - F(t_n, X(t_n))\bigr\|_{L^p(\Omega;\dot{H}^{-1})} \, d\sigma \notag \\
    &\le C(h^{\beta-\varepsilon} + k^{\frac{\beta-\varepsilon}{2}}) \bigl(1 + \|X_0\|_{L^p(\Omega;\dot{H}^\nu)}\bigr) \int_0^{t_n} (t_n-\sigma)^{-\frac{1+\beta-\varepsilon}{2}} \Bigl( (t_n-\sigma)^{\beta/2} + (t_n-\sigma)^{\beta/2} \sigma^{-\frac{\beta-\nu}{2}} \Bigr) d\sigma \notag \\
    &\le C(h^{\beta-\varepsilon} + k^{\frac{\beta-\varepsilon}{2}}) t_n^{-\frac{\beta-\nu}{2}} \bigl(1 + \|X_0\|_{L^p(\Omega;\dot{H}^\nu)}\bigr). \notag
\end{align}

\noindent \textit{Case 2: $\beta \in (1, 2)$.} 
By choosing $\varepsilon$ sufficiently small such that $\varepsilon < \beta - 1$, we have $\rho = \beta - \varepsilon > 1$. Combining Lemma~\ref{lem:operator_splitting} with $\eta = \beta - 2$, Assumption~\ref{ass:drift}, and Theorem~\ref{thm:singular_regularity}(ii) yields:
\begin{align}
    J_{F2} &\le C(h^{\beta-\varepsilon} + k^{\frac{\beta-\varepsilon}{2}}) \int_0^{t_n} (t_n-\sigma)^{-\frac{2-\varepsilon}{2}} \bigl\|F(\sigma, X(\sigma)) - F(t_n, X(t_n))\bigr\|_{L^p(\Omega;\dot{H}^{\beta-2})} \, d\sigma \notag \\
    &\le C(h^{\beta-\varepsilon} + k^{\frac{\beta-\varepsilon}{2}}) \int_0^{t_n} (t_n-\sigma)^{-1+\frac{\varepsilon}{2}} \left( (t_n-\sigma)^{\beta/2} + \|X(\sigma) - X(t_n)\|_{L^p(\Omega;\dot{H}^{\beta-1})} \right) d\sigma \notag \\
    &\le C(h^{\beta-\varepsilon} + k^{\frac{\beta-\varepsilon}{2}}) \bigl(1 + \|X_0\|_{L^p(\Omega;\dot{H}^\nu)}\bigr) \int_0^{t_n} (t_n-\sigma)^{-1+\frac{\varepsilon}{2}} \Bigl( (t_n-\sigma)^{\beta/2} + (t_n-\sigma)^{1/2} \sigma^{-\frac{\beta-\nu}{2}} \Bigr) d\sigma \notag \\
    &\le C(h^{\beta-\varepsilon} + k^{\frac{\beta-\varepsilon}{2}}) t_n^{-\frac{\beta-\nu}{2}} \bigl(1 + \|X_0\|_{L^p(\Omega;\dot{H}^\nu)}\bigr), \notag
\end{align}
In both cases, we conclude:
\begin{equation}
    J_{F2} \le C(h^{\beta-\varepsilon} + k^{\frac{1}{2}\min\{\beta-\varepsilon, 1\}}) t_n^{-\frac{\beta-\nu}{2}} \bigl(1 + \|X_0\|_{L^p(\Omega;\dot{H}^\nu)}\bigr). \label{eq:bound_JF2_new}
\end{equation}

\noindent\textbf{Estimate of $J_{F3}$:} 
Using the substitutions $\tau = t_n - \sigma$ and $j = n-i$, we obtain:
\begin{equation*}
    J_{F3} = \left\| \sum_{j=1}^{n} \int_{t_{j-1}}^{t_j} \bigl(S(\tau) - S_{h,k}^{j}P_h\bigr) F(t_n, X(t_n)) \, d\tau \right\|_{L^p(\Omega;H)}.
\end{equation*}
Recall $r \coloneqq \max\{\beta-1, 0\}$, and let $\alpha \coloneqq \max\{\beta-2, -1\}$. Notice that $\alpha = \max\{\beta-1, 0\} - 1 = r - 1$. Applying Lemma~\ref{lem:fully_discrete_error_estimates}(ii) with $\rho = -\alpha$, and Assumption~\ref{ass:drift}, yields:
\begin{align}
    J_{F3} &\le C(h^{2+\alpha} + k^{\frac{2+\alpha}{2}}) \|F(t_n, X(t_n))\|_{L^p(\Omega;\dot{H}^\alpha)} \notag \\
    &\le C(h^{2+\alpha} + k^{\frac{2+\alpha}{2}}) \bigl(1 + \|X(t_n)\|_{L^p(\Omega;\dot{H}^r)}\bigr). \notag
\end{align}
We observe that $2+\alpha = 2 + \max\{\beta-2, -1\} = \max\{\beta, 1\}$. Because $h, k \le 1$ and $\max\{\beta, 1\} \ge \beta$, we have $h^{2+\alpha} \le h^\beta$ and $k^{\frac{2+\alpha}{2}} \le k^{\beta/2}$. Finally, utilizing the continuous embedding $\dot{H}^\beta \hookrightarrow \dot{H}^r$ alongside the spatial regularity estimate \eqref{eq:weighted_spatial_regularity}, we obtain:
\begin{equation}
    J_{F3} \le C(h^\beta + k^{\beta/2}) t_n^{-\frac{\beta-\nu}{2}} \bigl(1 + \|X_0\|_{L^p(\Omega;\dot{H}^\nu)}\bigr). \label{eq:bound_JF3_new}
\end{equation}
Substituting \eqref{eq:bound_JF1_new}, \eqref{eq:bound_JF2_new}, and \eqref{eq:bound_JF3_new} into \eqref{eq:bound_JF_new} and simplifying the dominant spatial and temporal rates yields:
\begin{align}
    J_F &\le C k \sum_{i=0}^{n-1} t_{n-i}^{-1/2} \|X(t_i) - X_h^i\|_{L^p(\Omega;H)} \notag \\
    &\quad + C \bigl(h^{\beta-\varepsilon} + k^{\frac{1}{2}\min\{\beta-\varepsilon, 1\}}\bigr) t_n^{-\frac{\beta-\nu}{2}} \bigl(1 + \|X_0\|_{L^p(\Omega;\dot{H}^\nu)}\bigr). \label{eq:bound_JF_final}
\end{align}
To bound the error $J_G$, we decompose it into three components:
\begin{align}
    J_G &\le \left\| \sum_{i=0}^{n-1} \int_{t_i}^{t_{i+1}} S_{h,k}^{n-i}P_h \bigl( G(t_i, X(t_i)) - G(t_i, X_h^i) \bigr) \, dW(\sigma) \right\|_{L^p(\Omega;H)} \notag \\
    &\quad + \left\| \sum_{i=0}^{n-1} \int_{t_i}^{t_{i+1}} S_{h,k}^{n-i}P_h \bigl( G(\sigma, X(\sigma)) - G(t_i, X(t_i)) \bigr) \, dW(\sigma) \right\|_{L^p(\Omega;H)} \notag \\
    &\quad + \left\| \sum_{i=0}^{n-1} \int_{t_i}^{t_{i+1}} \bigl( S(t_n-\sigma) - S_{h,k}^{n-i}P_h \bigr) G(\sigma, X(\sigma)) \, dW(\sigma) \right\|_{L^p(\Omega;H)} \notag \\
    &\coloneqq J_{G1} + J_{G2} + J_{G3}. \label{eq:bound_JG}
\end{align}

\noindent \textbf{Estimate of $J_{G1}$:} 
We first apply Lemma~\ref{lem:BDG} (Burkholder--Davis--Gundy inequality). Then, letting $\{\psi_m\}_{m=1}^\infty$ be an orthonormal basis of $U_0$, we expand the Hilbert--Schmidt norm and apply \eqref{eq:negative_norm_smoothing_fully_discrete} with $\eta = (1-\beta)^+ = \max\{0, 1-\beta\} \in [0,1)$ to obtain:
\begin{align}
    J_{G1} &\le C \left\| \left( \sum_{i=0}^{n-1} \int_{t_i}^{t_{i+1}} \bigl\| S_{h,k}^{n-i}P_h \bigl( G(t_i, X(t_i)) - G(t_i, X_h^i) \bigr) \bigr\|_{\mathrm{HS}(U_0, H)}^2 \, d\sigma \right)^{1/2} \right\|_{L^p(\Omega)} \notag \\
    &= C \left\| \left( \sum_{i=0}^{n-1} \int_{t_i}^{t_{i+1}} \sum_{m=1}^\infty \bigl\| S_{h,k}^{n-i}P_h \bigl( G(t_i, X(t_i)) - G(t_i, X_h^i) \bigr) \psi_m \bigr\|_H^2 \, d\sigma \right)^{1/2} \right\|_{L^p(\Omega)} \notag \\
    &\le C \left\| \left( \sum_{i=0}^{n-1} \int_{t_i}^{t_{i+1}} t_{n-i}^{-(1-\beta)^+} \sum_{m=1}^\infty \bigl\| \bigl( G(t_i, X(t_i)) - G(t_i, X_h^i) \bigr) \psi_m \bigr\|_{\dot{H}^{\min\{0, \beta-1\}}}^2 \, d\sigma \right)^{1/2} \right\|_{L^p(\Omega)} \notag \\
    &\le C \left\| \left( k \sum_{i=0}^{n-1} t_{n-i}^{-(1-\beta)^+} \bigl\| G(t_i, X(t_i)) - G(t_i, X_h^i) \bigr\|_{\mathrm{HS}(U_0, \dot{H}^{\min\{0, \beta-1\}})}^2 \right)^{1/2} \right\|_{L^p(\Omega)}. \notag
\end{align}
Applying Minkowski's integral inequality and Assumption~\ref{ass:diffusion}, we conclude:
\begin{align}
    J_{G1} &\le C \left( k \sum_{i=0}^{n-1} t_{n-i}^{-(1-\beta)^+} \bigl\| G(t_i, X(t_i)) - G(t_i, X_h^i) \bigr\|_{L^p(\Omega; \mathrm{HS}(U_0, \dot{H}^{\min\{0, \beta-1\}}))}^2 \right)^{1/2} \notag \\
    &\le C \left( k \sum_{i=0}^{n-1} t_{n-i}^{-(1-\beta)^+} \|X(t_i) - X_h^i\|_{L^p(\Omega; H)}^2 \right)^{1/2}. \label{eq:bound_JG1}
\end{align}

\noindent \textbf{Estimate of $J_{G2}$:} We apply Lemma~\ref{lem:BDG} (Burkholder--Davis--Gundy inequality), expand with the orthonormal basis $\{\psi_m\}_{m=1}^\infty$, and use \eqref{eq:negative_norm_smoothing_fully_discrete} with $\eta = (1-\beta)^+ \in [0, 1)$, just as we did for $J_{G1}$. By subsequently applying Minkowski's integral inequality, we obtain:
\begin{equation}
    J_{G2} \le C \left( \sum_{i=0}^{n-1} \int_{t_i}^{t_{i+1}} t_{n-i}^{-(1-\beta)^+} \bigl\| G(\sigma, X(\sigma)) - G(t_i, X(t_i)) \bigr\|_{L^p(\Omega; \mathrm{HS}(U_0, \dot{H}^{\min\{0, \beta-1\}}))}^2 \, d\sigma \right)^{1/2}. \label{eq:bound_JG2_intermediate}
\end{equation}
To handle the singularity at $t=0$, we split the sum into the initial term ($i=0$) and the remaining sum ($i \ge 1$). For the initial term ($i=0$), we apply Assumption~\ref{ass:diffusion} alongside \eqref{eq:base_spatial_regularity}. Noting that $t_n^{-(1-\beta)^+} \le k^{-(1-\beta)^+}$, and $1 - (1-\beta)^+ = \min\{\beta, 1\}$, we bound this component inside the square root as:
\begin{align}
    &\int_0^k t_n^{-(1-\beta)^+} \bigl\| G(\sigma, X(\sigma)) - G(0, X_0) \bigr\|_{L^p(\Omega; \mathrm{HS}(U_0, \dot{H}^{\min\{0, \beta-1\}}))}^2 \, d\sigma \notag \\
    &\quad \le \int_0^k k^{-(1-\beta)^+} C \bigl( \sigma^{\beta/2} + \|X(\sigma) - X_0\|_{L^p(\Omega; H)} \bigr)^2 \, d\sigma \notag \\
    &\quad \le C k^{\min\{\beta, 1\}} \bigl(1 + \|X_0\|_{L^p(\Omega; \dot{H}^\nu)}\bigr)^2. \label{eq:bound_JG2_boundary}
\end{align}
For the remaining sum ($i \ge 1$), we apply Assumption~\ref{ass:diffusion} and Theorem~\ref{thm:singular_regularity}(ii). Using the fact that $\sigma - t_i \le k$ and $k^\beta \le k^{\min\{\beta, 1\}}$ for $k \le 1$, we obtain
\begin{align}
    &\sum_{i=1}^{n-1} \int_{t_i}^{t_{i+1}} t_{n-i}^{-(1-\beta)^+} \bigl\| G(\sigma, X(\sigma)) - G(t_i, X(t_i)) \bigr\|_{L^p(\Omega; \mathrm{HS}(U_0, \dot{H}^{\min\{0, \beta-1\}}))}^2 \, d\sigma \notag \\
    &\quad \le C \sum_{i=1}^{n-1} k t_{n-i}^{-(1-\beta)^+} \left( k^\beta + k^{\min\{\beta, 1\}} t_i^{-(\beta-\nu)} \right) \bigl(1 + \|X_0\|_{L^p(\Omega; \dot{H}^\nu)}\bigr)^2 \notag \\
    &\quad \le C k^{\min\{\beta, 1\}} \bigl(1 + \|X_0\|_{L^p(\Omega; \dot{H}^\nu)}\bigr)^2 \int_0^{t_n} (t_n-\sigma)^{-(1-\beta)^+} \bigl( 1 + \sigma^{-(\beta-\nu)} \bigr) d\sigma \notag \\
    &\quad \le C k^{\min\{\beta, 1\}} \bigl(1 + \|X_0\|_{L^p(\Omega; \dot{H}^\nu)}\bigr)^2, \label{eq:bound_JG2_interior}
\end{align}
Combining \eqref{eq:bound_JG2_interior} with \eqref{eq:bound_JG2_boundary}, and substituting in \eqref{eq:bound_JG2_intermediate}, and using the fact $1 \le T^{\frac{\beta-\nu}{2}} t_n^{-\frac{\beta-\nu}{2}}$ yields 
\begin{equation}
    J_{G2} \le C k^{\frac{1}{2}\min\{\beta, 1\}} t_n^{-\frac{\beta-\nu}{2}} \bigl(1 + \|X_0\|_{L^p(\Omega; \dot{H}^\nu)}\bigr). \label{eq:bound_JG2}
\end{equation}

\noindent \textbf{Estimate of $J_{G3}$:} 
Let $\varepsilon \in (0, \min\{\beta, 1\})$. Applying Lemma~\ref{lem:BDG} (Burkholder--Davis--Gundy inequality), Minkowski's integral inequality, the standard Hilbert--Schmidt norm property $\|LM\|_{\mathrm{HS}(U_0,H)} \leq \|L\|_{\mathcal{L}(H)}\|M\|_{\mathrm{HS}(U_0,H)}$, and Lemma~\ref{lem:operator_splitting} with $\rho = \beta - \varepsilon$ and $\eta = \beta - 1$, alongside the uniform bound \eqref{eq:G_uniform_bound} established in Theorem~\ref{thm:singular_regularity}(ii), we estimate $J_{G3}$ as:
\begin{align}
    J_{G3} &\le C \left\| \left( \sum_{i=0}^{n-1} \int_{t_i}^{t_{i+1}} \bigl\| \bigl(S(t_n-\sigma) - S_{h,k}^{n-i}P_h\bigr) A^{\frac{1-\beta}{2}} \bigr\|_{\mathcal{L}(H)}^2 \bigl\| G(\sigma,X(\sigma)) \bigr\|_{\mathrm{HS}(U_0, \dot{H}^{\beta-1})}^2 \, d\sigma \right)^{1/2} \right\|_{L^p(\Omega)} \notag \\
    &\le C(h^{\beta-\varepsilon} + k^{\frac{\beta-\varepsilon}{2}}) \left( \int_0^{t_n} (t_n-\sigma)^{-(1-\varepsilon)} \bigl\| G(\sigma, X(\sigma)) \bigr\|_{L^p(\Omega; \mathrm{HS}(U_0, \dot{H}^{\beta-1}))}^2 \, d\sigma \right)^{1/2} \notag \\
    &\le C(h^{\beta-\varepsilon} + k^{\frac{\beta-\varepsilon}{2}}) \bigl(1 + \|X_0\|_{L^p(\Omega; \dot{H}^\nu)}\bigr) \left( \int_0^{t_n} (t_n-\sigma)^{-(1-\varepsilon)} \, d\sigma \right)^{1/2} \notag \\
    &\le C(h^{\beta-\varepsilon} + k^{\frac{1}{2}\min\{\beta-\varepsilon, 1\}}) t_n^{-\frac{\beta-\nu}{2}} \bigl(1 + \|X_0\|_{L^p(\Omega; \dot{H}^\nu)}\bigr),\label{eq:bound_JG3}
\end{align}
where we have used the fact that  $t_n^{\varepsilon/2} \le C t_n^{-\frac{\beta-\nu}{2}}$ (since $\varepsilon > 0$ and $\beta \ge \nu$) and $k^{\frac{\beta-\varepsilon}{2}} \le k^{\frac{1}{2}\min\{\beta-\varepsilon, 1\}}$ for $k \le 1$. Combining the bounds \eqref{eq:bound_JG1}, \eqref{eq:bound_JG2}, and \eqref{eq:bound_JG3}, and simplifying, we obtain 
\begin{equation}\label{eq:bound_JG_final}
    J_G \le C \left( k \sum_{i=0}^{n-1} t_{n-i}^{-(1-\beta)^+} \|X(t_i) - X_h^i\|_{L^p(\Omega; H)}^2 \right)^{1/2} 
    + C \bigl( h^{\beta-\varepsilon} + k^{\frac{1}{2}\min\{\beta-\varepsilon, 1\}} \bigr) t_n^{-\frac{\beta-\nu}{2}} \bigl(1 + \|X_0\|_{L^p(\Omega; \dot{H}^\nu)}\bigr).
\end{equation}
Combining the estimates \eqref{eq:bound_J0}, \eqref{eq:bound_JF_final}, and \eqref{eq:bound_JG_final} yields the bound for the total error $e_n \coloneqq \|X(t_n) - X_h^n\|_{L^p(\Omega;H)}$. Thus, we obtain:
\begin{equation*}
    e_n \le C \mathcal{E}_{h,k} t_n^{-\frac{\beta-\nu}{2}} + C k \sum_{i=0}^{n-1} t_{n-i}^{-1/2} e_i + C \left( k \sum_{i=0}^{n-1} t_{n-i}^{-(1-\beta)^+} e_i^2 \right)^{1/2},
\end{equation*}
where $\mathcal{E}_{h,k} \coloneqq \bigl( h^{\beta-\varepsilon} + k^{\frac{1}{2}\min\{\beta-\varepsilon, 1\}} \bigr) \bigl(1 + \|X_0\|_{L^p(\Omega; \dot{H}^\nu)}\bigr)$. Squaring both sides and applying the Cauchy--Schwarz inequality yields
\begin{align*}
    e_n^2 &\le C \mathcal{E}_{h,k}^2 t_n^{-(\beta-\nu)} + C \left( k \sum_{i=0}^{n-1} t_{n-i}^{-1/2} \right) \left( k \sum_{i=0}^{n-1} t_{n-i}^{-1/2} e_i^2 \right) + C k \sum_{i=0}^{n-1} t_{n-i}^{-(1-\beta)^+} e_i^2 \\
    &\le C \mathcal{E}_{h,k}^2 t_n^{-(\beta-\nu)} + C k \sum_{i=0}^{n-1} \bigl( t_{n-i}^{-1/2} + t_{n-i}^{-(1-\beta)^+} \bigr) e_i^2,
\end{align*}
where we used the fact that $k \sum_{i=0}^{n-1} t_{n-i}^{-1/2} \le \int_0^{t_n} (t_n-\sigma)^{-1/2} \, d\sigma = 2 t_n^{1/2} \le C$.

Let $\gamma \coloneqq \max\{1/2, (1-\beta)^+\}$. Since $\beta \in (0, 2)$, it follows that $\gamma \in [1/2, 1)$, meaning $t_{n-i}^{-1/2} + t_{n-i}^{-(1-\beta)^+} \le 2 t_{n-i}^{-\gamma}$.
Setting $\varphi_n \coloneqq e_n^2$ and applying the discrete Gr\"onwall inequality (Lemma~\ref{lem:discrete_gronwall}) with exponents $\theta = 1 -(\beta-\nu) > 0$ and $\alpha = 1 -\gamma > 0$ yields
\begin{equation*}
e_n^2 \le C \mathcal{E}_{h,k}^2 \bigl(1 + t_n^{-(\beta-\nu)}\bigr).
\end{equation*}
Finally, since $1 \le T^{\beta-\nu} t_n^{-(\beta-\nu)}$, taking the square root we obtain the final estimate:
\begin{equation*}
    \|X(t_n) - X_h^n\|_{L^p(\Omega;H)} \le C \bigl( h^{\beta-\varepsilon} + k^{\frac{1}{2}\min\{\beta-\varepsilon, 1\}} \bigr) t_n^{-\frac{\beta-\nu}{2}} \bigl(1 + \|X_0\|_{L^p(\Omega; \dot{H}^\nu)}\bigr),
\end{equation*}
concluding the proof.
\end{proof}

\section{Numerical experiments}\label{sec:numerical_results}

In this section, we present numerical experiments to verify our theoretical strong convergence rates. For clarity and consistency, we first define the general setting, the noise regularity, the discretization scheme, and the specific error metrics used to evaluate temporal and spatial convergence.

\paragraph{General setting.} Let $\mathcal{O} \coloneqq (0,1)$ and let $U = H = L^2(\mathcal{O})$. Define the linear operator $A: \mathcal{D}(A) \subset H \rightarrow H$ as the negative Laplacian equipped with homogeneous Dirichlet boundary conditions, given by $Au = -\frac{\partial^2 u}{\partial x^2}$ for $u \in \mathcal{D}(A) \coloneqq H^2(\mathcal{O}) \cap H_0^1(\mathcal{O})$. This setting provides an increasing sequence of real numbers $\eta_j = \pi^2j^2$ and an orthonormal basis $\{e_j = \sqrt{2}\sin(j\pi x), x \in \mathcal{O}\}_{j\in\mathbb{N}}$ such that $Ae_j = \eta_je_j$. Furthermore, let $W$ be a $Q$-Wiener process taking values in $H$. Assuming the covariance operator $Q$ shares the same eigenfunctions $\{e_j\}_{j\in\mathbb{N}}$ with $A$, the noise $W$ admits the Karhunen--Lo\`eve expansion
\begin{equation}\label{eq:KL_expansion}
    W(t) = \sum_{j=1}^{\infty} \sqrt{q_j} e_j \beta_j(t),
\end{equation}
where $q_j$ are the eigenvalues of $Q$, and $\{\beta_j(t)\}_{j\in\mathbb{N}}$ is a sequence of independent, real-valued standard Brownian motions.

\paragraph{Noise regularity.}
To control the spatial regularity of the noise, we define the covariance operator $Q$ as a fractional inverse power of the operator $A$. Specifically, we set $Q = A^{-r}$ for $r \ge 0$. Consequently, $Q$ and $A$ share the same eigenfunctions, and the eigenvalues of $Q$ are given by $q_j = \pi^{-2r} j^{-2r}$. We define the diffusion mapping $G$ as the Nemytskii operator associated with the scalar function $g \colon [0,T] \times \mathcal{O} \times \mathbb{R} \to \mathbb{R}$, satisfying 
\begin{align}
    \sup_{t \in [0,T], x \in \mathcal{O}} |g(t, x, 0)| &\le M, \\
    |g(t, x, y) - g(s, x, z)| &\le C \left( |t-s|^{\beta/2} + |y - z| \right),
\end{align}
for all $t, s \in [0,T]$, $x \in \mathcal{O}$, and $y, z \in \mathbb{R}$. As defined in \eqref{eq:Nemytskii_G} of Section~\ref{sec:introduction}, we have $(G(t, \phi)\psi)(x) \coloneqq g(t, x, \phi(x)) \psi(x)$ for all $\phi,\psi \in H$ and $x \in \mathcal{O}$.

\medskip
\noindent
\boxed{\text{Case 1: Space--time white noise }(\beta < 1/2).}
For space-time white noise, the covariance operator is the identity ($Q=I$, $r=0$), which implies $U_0 = H$. To verify Assumption~\ref{ass:diffusion}, we follow Case 1 of Lemma 4.2 established in \cite{NaikTripathi2026} with parameters $d=1$, $\theta = 0$, and $\mu = \nu = 0$. Equating the fractional index of the referenced lemma to $\beta - 1$, restricts the parameter to $\beta < 1/2$. Consequently, for all $\beta \in (0,1/2)$, Case 1 yields 
\begin{align*}
\|G(t,u) - G(s,v)\|_{\mathrm{HS}(H, \dot{H}^{\min(0,\beta-1)})} &\le C \left( |t-s|^{\beta/2} + \|u-v\|_H \right), \\
\|G(t,u)\|_{\mathrm{HS}(H, \dot{H}^{\beta-1})} &\le C \bigl(1 + \|u\|_H\bigr). 
\end{align*}
This satisfies the first and second conditions of Assumption~\ref{ass:diffusion} for the space-time white noise regime.

\medskip
\noindent
\boxed{\text{Case 2: Trace-class noise }(1 \le \beta < 2).}
For this regime, we assume the covariance operator $Q = A^{-r}$ for $r > 1/2$. This ensures the operator is of trace-class ($\sum_{j=1}^\infty q_j < \infty$). Because the basis functions are uniformly bounded ($\|e_j\|_{L^\infty(\mathcal{O};\mathbb{R})}^2 \le 2$), this implies $\sum_{j=1}^\infty q_j \|e_j\|_{L^\infty(\mathcal{O};\mathbb{R})}^2 < \infty$. Consequently, we apply Case 2 of \cite[Lemma~4.2]{NaikTripathi2026} with parameters $d=1$, $\theta = 0$, and $\mu = \nu = 0$. By setting the fractional index of the referenced lemma to $0$, Case 2 yields 
$$ \|G(t,u) - G(s,v)\|_{\mathrm{HS}(U_0, H)} \le C \left( |t-s|^{\beta/2} + \|u - v\|_H \right), $$
which satisfies the first condition of Assumption~\ref{ass:diffusion} since $\min(0, \beta-1) = 0$ for all $\beta \ge 1$. 

For $1 < \beta < 2$, the third condition requires bounding the linear growth in $\dot{H}^{\beta-1}$. We apply the commutative noise results from Jentzen and R\"ockner \cite[Subsection 4.3]{MR2852200} with $d=1$, $\rho = 2r$, $\nu = \pi^{-2r}$ and $\alpha = (\beta-1)/2$. Because the basis functions vanish at the boundary, Assumption~\ref{ass:diffusion} is fulfilled for 
\begin{equation}\label{eq:spectral_constraint}
    2r > 2\beta - 1, \quad \text{with } \beta \neq 1.5.
\end{equation}
To maintain a unified framework that guarantees this condition across all experiments, we define 
\begin{equation}\label{eq:mult_r_relation}
    2r = 2\beta - 1 + \delta, \quad \text{for } \delta > 0 \text{ (with } \delta = 0.001 \text{ in our implementation)}.
\end{equation}
We classify our numerical simulations into the following cases:
\begin{itemize}
    \item Setting $\beta=1.99$ yields $r = 1.4905$.  
    \item Setting $\beta=1.0$ yields $r = 0.5005$.
    \item Setting $\beta=0.4995$ yields $r=0$, i.e., $Q=I$ (space-time white noise).
\end{itemize}
Substituting the first two parameter pairs confirms that the required constraint \eqref{eq:spectral_constraint} is met for the trace-class noise. Furthermore, the third parameter pair satisfies the space-time white noise condition $\beta < 1/2$ established in Case 1. Hence, Assumption~\ref{ass:diffusion} is satisfied for all simulation cases $\beta \in \{1.99, 1.0, 0.4995\}$.

\paragraph{Discretization methodology.}
We spatially discretize the SPDEs given in this section using a standard Galerkin finite element method with continuous piecewise linear basis functions on a uniform grid of $N_x$ intervals (with mesh size $h = 1/N_x$), and in time using a linearly implicit Euler method with $N_t$ steps (with step size $k = T/N_t$). Furthermore, for the reference solution, the noise expansion \eqref{eq:KL_expansion} is truncated using a finite sum of $N_{x,\text{ref}} - 1$ terms, matching the dimension of the reference finite element space (cf.~Yan~\cite[Section~4]{MR2182132}). To isolate the spatial discretization error and measure the strong approximation error across the same Brownian paths, this identical truncated noise realization is applied across all coarsened spatial grids.

We measure the strong approximation error against a reference solution $U_{\text{ref}}$ computed on a highly refined grid with $N_{x,\text{ref}} = 256$ and $N_{t,\text{ref}} = 65536$. To approximate the expectations, we compute the average over $N_{\text{sim}} = 250$ independent sample paths. We isolate the discretization errors using a decoupled methodology:
\begin{itemize}
    \item The spatial grid is frozen at $N_{x} = 256$, with temporal refinements $N_t \in \{16, 64, 256, 1024, 4096\}$.
    \item The temporal grid is frozen at $N_{t} = 65536$, with spatial refinements $N_x \in \{4, 8, 16, 32, 64\}$.
\end{itemize}
All numerical simulations were implemented in MATLAB, utilizing parallel computing to efficiently process the Monte Carlo sample paths.

\paragraph{Error metrics and convergence rates.}
We measure the strong error in the $L^\infty(0,T; L^2(\Omega; H))$ norm. For the discrete time steps $t_n = n k$, the standard maximum strong error is approximated via a Monte Carlo average over $N_{\text{sim}}$ independent sample paths:
\begin{equation}\label{eq:standard_error}
    e_{\max}(h,k) \approx \max_{1 \le n \le N_t} \left( \frac{1}{N_{\text{sim}}} \sum_{j=1}^{N_{\text{sim}}} \bigl\| U^n(\omega_j) - U_{\text{ref}}(t_n, \omega_j) \bigr\|_H^2 \right)^{\!1/2}.
\end{equation}

For nonsmooth initial data, we compute the maximum strong error with a time weight to verify the theoretical convergence rates in Theorem~\ref{thm:strong_convergence_of_fully_discrete_scheme}, which contains an initial singular factor of $t_n^{-(\beta-\nu)/2}$. Because $\beta \in (0,2)$ and $\nu > 0$, this singularity exponent is bounded above by $1$. Rather than calculating the exact fractional regularity $\nu$ for each initial condition, we introduce a universal time weight $t_n$ and define the weighted error metric as:
\begin{equation}\label{eq:weighted_error}
    e_{w,\max}(h,k) \approx \max_{1 \le n \le N_t} \left( t_n \left( \frac{1}{N_{\text{sim}}} \sum_{j=1}^{N_{\text{sim}}} \bigl\| U^n(\omega_j) - U_{\text{ref}}(t_n, \omega_j) \bigr\|_H^2 \right)^{\!1/2} \right).
\end{equation}
Multiplying by $t_n$ uniformly absorbs the theoretical singularity at $t=0$. Since $t_n \in (0,T]$ and $1 - (\beta-\nu)/2 \ge 0$, the product $t_n^{1 - (\beta-\nu)/2}$ remains bounded. This neutralizes the transient initial blow-up near $t=0$, allowing us to recover the sharp convergence rates of $O(h^{\beta-\varepsilon} + k^{\frac{1}{2}\min\{\beta-\varepsilon, 1\}})$ for $\varepsilon>0$. 

Finally, to empirically estimate the experimental orders of convergence between successive grid refinement levels $j$ and $j+1$ (for $j = 1, \dots, 4$), we compute the standard spatial and temporal convergence rates ($R_x, R_t$) according to our decoupled methodology. By freezing the temporal parameter at a highly refined $k_{\text{ref}}$, the spatial rate is given by:
$$ R_x = \frac{\log(e_{\max}(h_j, k_{\text{ref}}) / e_{\max}(h_{j+1}, k_{\text{ref}}))}{\log(h_j / h_{j+1})}. $$
Conversely, freezing the spatial parameter at a highly refined $h_{\text{ref}}$ yields the temporal rate:
$$ R_t = \frac{\log(e_{\max}(h_{\text{ref}}, k_j) / e_{\max}(h_{\text{ref}}, k_{j+1}))}{\log(k_j / k_{j+1})}. $$
The time-weighted convergence rates $\tilde{R}_x$ and $\tilde{R}_t$ are evaluated using the identical logarithmic ratios applied to the weighted error metric $e_{w,\max}$.

\subsection{Smooth initial data}
\begin{example}\label{ex:highly_nonlinear_bounded}
As a specific instance of the general SPDE \eqref{eq:SPDE}, we consider the one-dimensional modified Langmuir model on $\mathcal{O} = (0,1)$ with $T=1$, driven by multiplicative noise and formulated as
\begin{equation}\label{eq:spde_langmuir_smooth}
\left\{
\begin{aligned}
    du(t,x) &= \left[ \frac{\partial^2 u}{\partial x^2}(t,x) + \frac{u(t,x)}{1 + |u(t,x)|} \right] dt + \frac{3(1 - u^2(t,x))}{1 + u^2(t,x)} \, dW(t), && (t,x) \in (0,1] \times (0,1), \\[1ex]
    u(t,x) &= 0, && (t,x) \in (0,1] \times \{0, 1\}, \\[1ex]
    u(0,x) &= \sin(\pi x), && x \in (0,1).
\end{aligned}
\right.
\end{equation}
\end{example}

Because the initial data $u_0(x) = \sin(\pi x)$ is smooth, the convergence rate is not constrained by the initial condition, but is entirely governed by the spatial regularity of the noise $\beta$. Figure~\ref{fig:loglog_langmuir_smooth} confirms the theoretical strong error bound of $O(h^{\beta-\varepsilon} + k^{\frac{1}{2}\min\{\beta-\varepsilon, 1\}})$ for $\varepsilon>0$, visually demonstrating the optimal spatial and temporal rates of convergence. Furthermore, since the smooth initial profile introduces no transient singularity, the standard metric (\textit{top row}) and the time-weighted metric (\textit{bottom row}) yield identical convergence behavior.

\begin{figure}[H]
    \centering
    \includegraphics[width=0.85\textwidth]{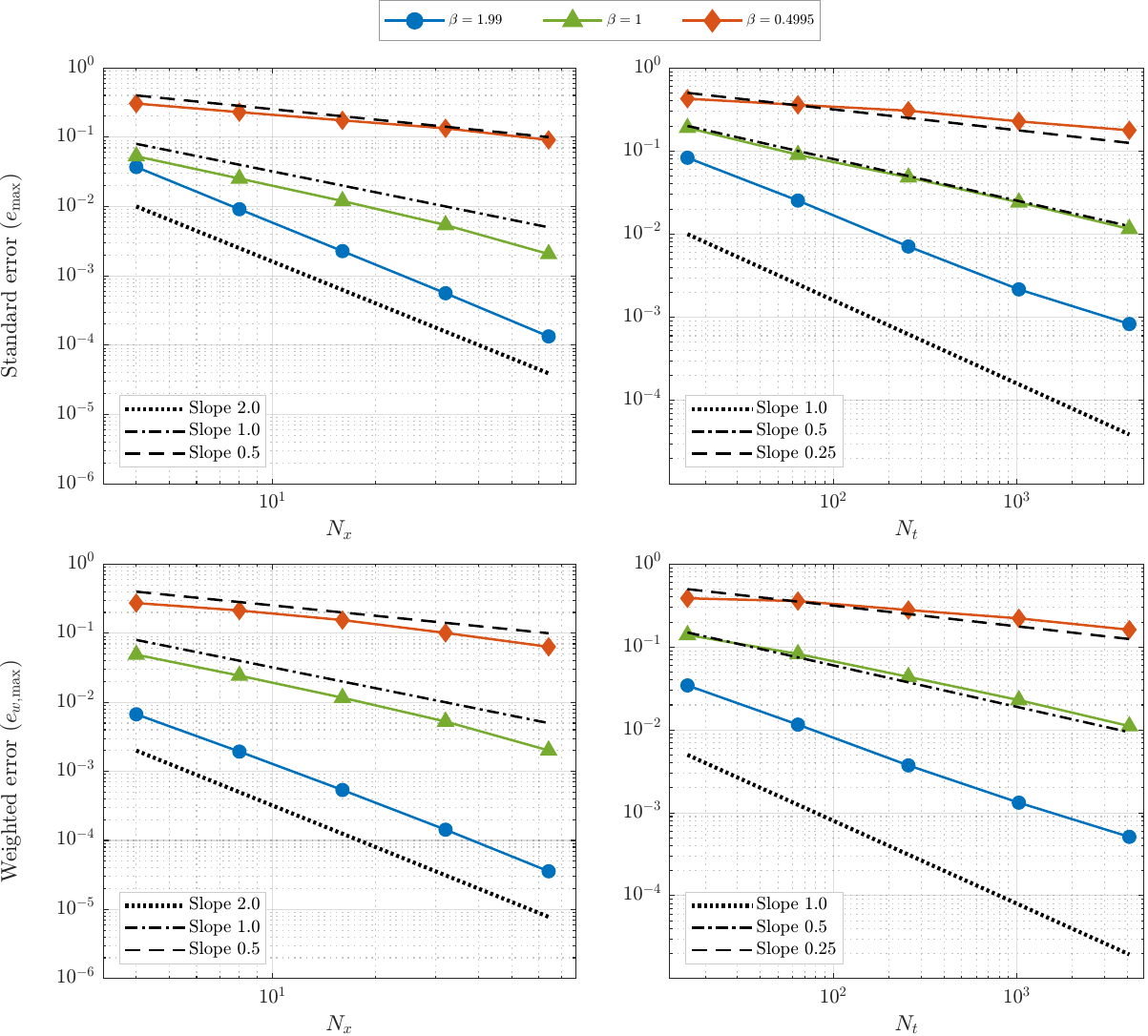}
    \caption{Spatial (\textit{left column}) and temporal (\textit{right column}) convergence rates across multiple parameter values $\beta$, evaluated using the standard strong error metric $e_{\max}$ (\textit{top row}) and the time-weighted error metric $e_{w,\max}$ (\textit{bottom row}).}
    \label{fig:loglog_langmuir_smooth}
\end{figure}

\subsection{Nonsmooth initial data}\label{subsec:nonsmooth_data_examples}
\begin{example}\label{ex:highly_nonlinear_nonsmooth}
For our next example, we consider the one-dimensional modified Langmuir model on $\mathcal{O} = (0,1)$ with $T=1$ and nonsmooth initial data, formulated as
\begin{equation}\label{eq:num_spde_multiplicative_rational}
\left\{
\begin{aligned}
    du(t,x) &= \left[ \frac{\partial^2 u}{\partial x^2}(t,x) + \frac{u(t,x)}{1 + |u(t,x)|} \right] dt + \frac{3(1 - u^2(t,x))}{1 + u^2(t,x)} \, dW(t), && (t,x) \in (0,1] \times (0,1), \\[1ex]
    u(t,x) &= 0, && (t,x) \in (0,1] \times \{0, 1\}, \\[1ex]
    u(0,x) &= \mathds{1}_{(0, 0.5)}(x), && x \in (0,1).
\end{aligned}
\right.
\end{equation}
\end{example}

Since the initial condition $u_0(x) = \mathds{1}_{(0, 0.5)}(x) \in \dot{H}^{1/2-\epsilon}(\mathcal{O})$ for $\epsilon > 0$. As illustrated in the top row of Figure~\ref{fig:loglog_langmuir_nonsmooth_step}, the lack of initial data regularity prevents the numerical scheme from achieving the optimal standard convergence rates. However, multiplying the error by the discrete time weight $t_n$ compensates for the initial singularity. The bottom row of Figure~\ref{fig:loglog_langmuir_nonsmooth_step} demonstrates that the time-weighted metric recovers the sharp theoretical convergence bound of $O(h^{\beta-\varepsilon} + k^{\frac{1}{2}\min\{\beta-\varepsilon, 1\}})$ for $\varepsilon>0$ across all tested noise regularities.

\begin{figure}[H]
    \centering
    \includegraphics[width=0.85\textwidth]{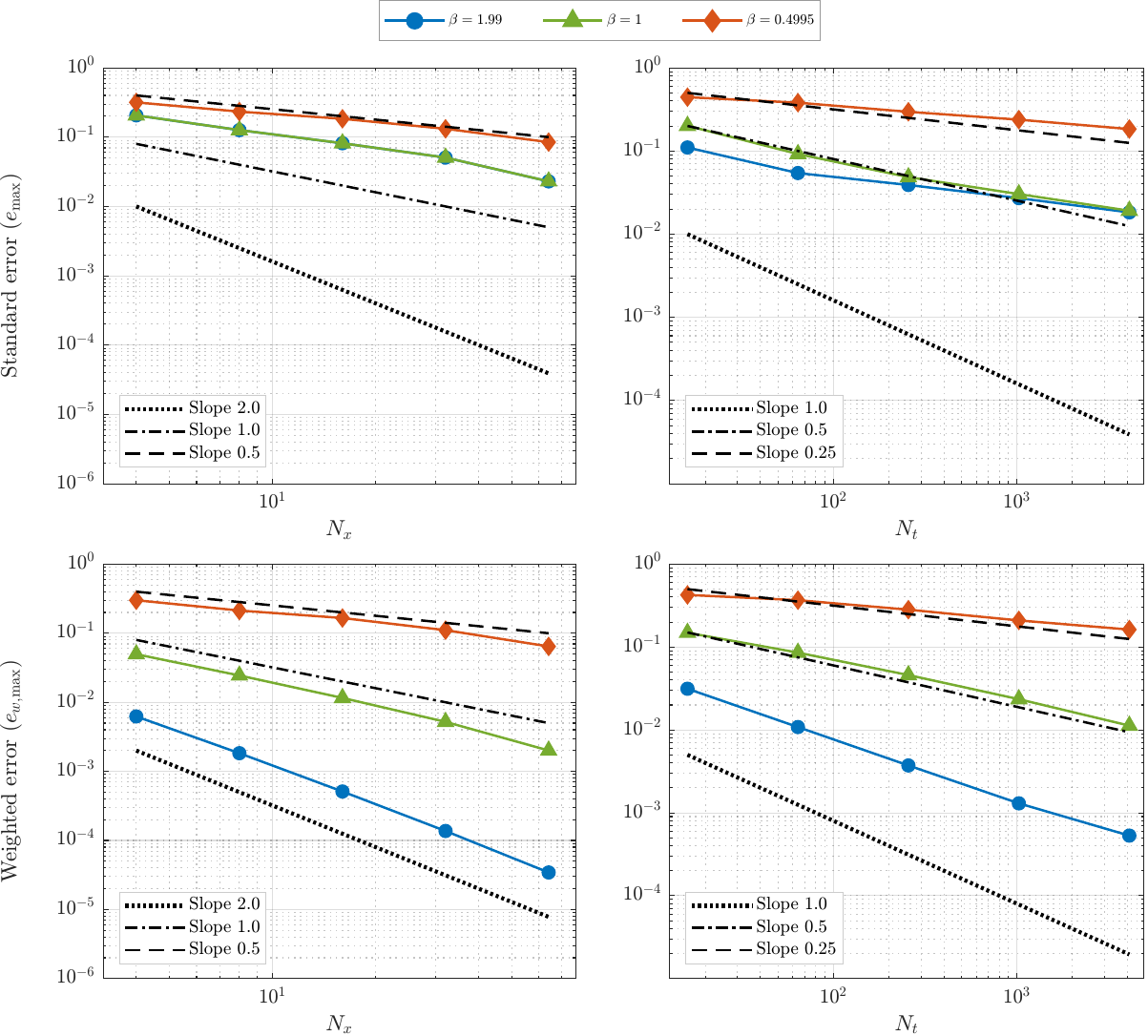}
    \caption{Spatial (\textit{left column}) and temporal (\textit{right column}) convergence rates across multiple parameter values $\beta$, evaluated using the standard strong error metric $e_{\max}$ (\textit{top row}) and the time-weighted error metric $e_{w,\max}$ (\textit{bottom row}).}
    \label{fig:loglog_langmuir_nonsmooth_step}
\end{figure}

\begin{example}\label{ex:pam_discontinuous}
As another instance of the general SPDE \eqref{eq:SPDE}, we consider the one-dimensional Parabolic Anderson Model (PAM) on $\mathcal{O} = (0,1)$ with $T=1$ and discontinuous initial data, formulated as
\begin{equation}\label{eq:num_spde_pam}
\left\{
\begin{aligned}
    du(t,x) &= \frac{\partial^2 u}{\partial x^2}(t,x) \, dt + u(t,x) \, dW(t), && (t,x) \in (0,1] \times (0,1), \\[1ex]
    u(t,x) &= 0, && (t,x) \in (0,1] \times \{0, 1\}, \\[1ex]
    u(0,x) &= \mathds{1}_{(0, 0.5)}(x), && x \in (0,1).
\end{aligned}
\right.
\end{equation}
\end{example}

For the Parabolic Anderson Model, we again utilize the discontinuous initial condition $u_0(x) = \mathds{1}_{(0, 0.5)}(x) \in \dot{H}^{1/2-\epsilon}(\mathcal{O})$ for $\epsilon > 0$. As illustrated in the top row of Figure~\ref{fig:loglog_pam_nonsmooth_step}, the lack of initial data regularity prevents the numerical scheme from achieving the standard convergence rates. However, multiplying the error by the discrete time weight $t_n$ compensates for the initial singularity. The bottom row of Figure~\ref{fig:loglog_pam_nonsmooth_step} demonstrates that the time-weighted metric recovers the sharp theoretical convergence rate of $O(h^{\beta-\varepsilon} + k^{\frac{1}{2}\min\{\beta-\varepsilon, 1\}})$ for $\varepsilon>0$ across all tested noise regularities.

\begin{figure}[H]
    \centering
    \includegraphics[width=0.85\textwidth]{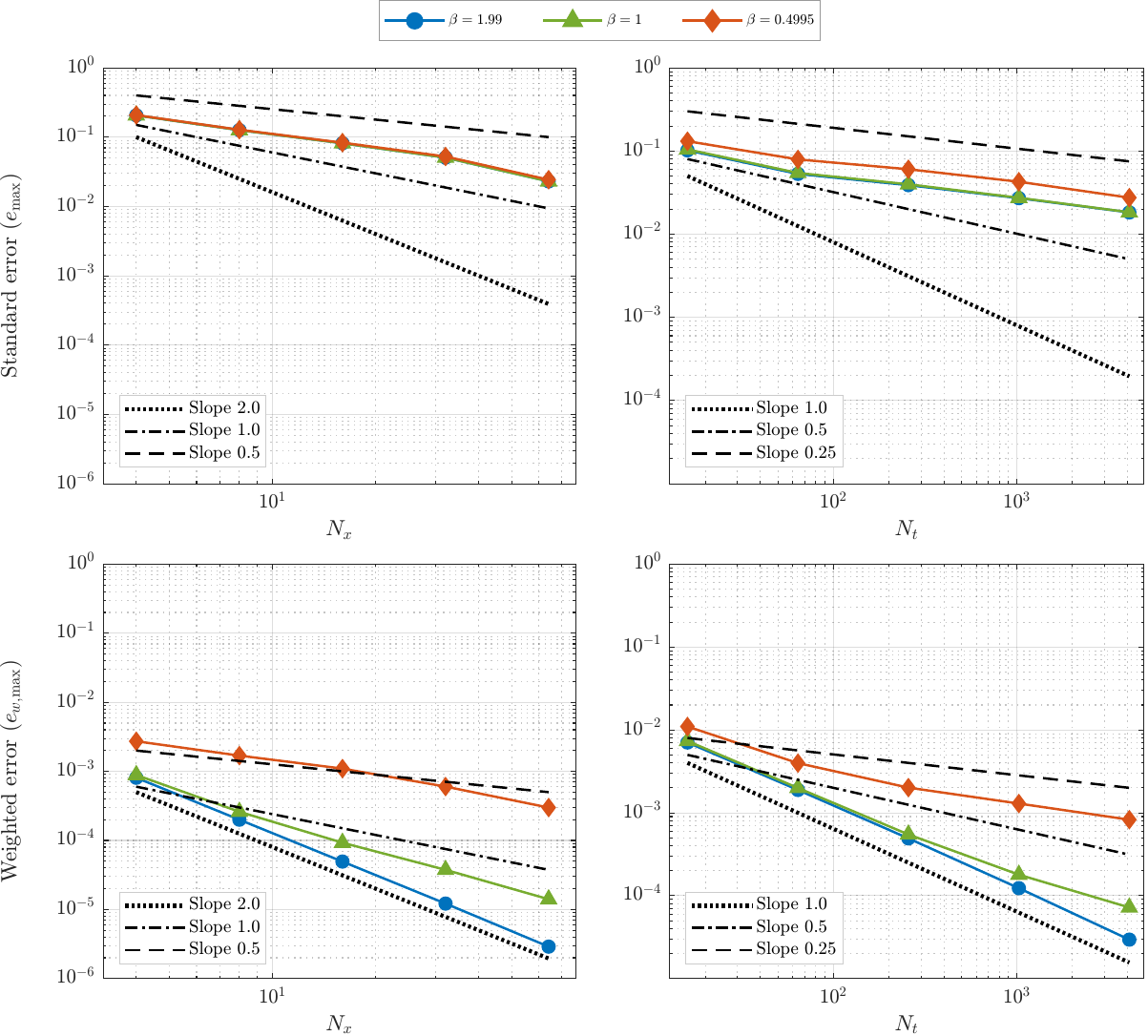}
    \caption{Spatial (\textit{left column}) and temporal (\textit{right column}) convergence rates across multiple parameter values $\beta$, evaluated using the standard strong error metric $e_{\max}$ (\textit{top row}) and the time-weighted error metric $e_{w,\max}$ (\textit{bottom row}).}
    \label{fig:loglog_pam_nonsmooth_step}
\end{figure}

\subsubsection*{Acknowledgments}
The authors acknowledge the support provided by the Indian Institute of Technology Goa, India.

\bibliographystyle{abbrv} 
\bibliography{references.bib}

\appendix

\end{document}